\documentclass[10pt,a4paper]{article}

\usepackage[a4paper,left=2.5cm,right=2.5cm,
top=2.5cm,bottom=2.5cm]{geometry}

\usepackage{amsmath,amssymb,amsfonts,amsthm}
\usepackage{mathrsfs}
\usepackage{graphicx}
\usepackage{booktabs}
\usepackage{multirow}
\usepackage{xcolor}
\usepackage{enumerate}
\usepackage{appendix}
\usepackage{hyperref}

\newtheorem{theorem}{Theorem}[section]

\newtheorem{lemma}[theorem]{Lemma}
\newtheorem{corollary}[theorem]{Corollary}

\theoremstyle{definition}
\newtheorem{definition}[theorem]{Definition}
\newtheorem{example}[theorem]{Example}

\theoremstyle{remark}

\numberwithin{equation}{section}

\title{\bfseries
Casorati inequalities for Riemannian submersion along mixed
distributions and their applications
}

\author{Ravindra Singh$^{1}$ and Mukut Mani Tripathi$^{2^{\ast}}$ }

\date{
\small
$^{1}$Department of Mathematics, Banaras Hindu University, Varanasi, India\\
\textit{Email:} \href{mailto:khandelrs@bhu.ac.in}{khandelrs@bhu.ac.in} \\
ORCID: 0009-0009-1270-3831 \\
$^{2}$Department of Mathematics, Banaras Hindu University, Varanasi, India\\
\textit{Email:} \href{mailto:mmtripathi66@yahoo.com}{mmtripathi66@yahoo.com} \\
ORCID: 0000-0002-6113-039X
}

\begin{document}
\maketitle

\abstract{\noindent This paper introduces Casorati inequalities for the normalised scalar curvature and normalised Casorati curvature of vertical and horizontal distributions for Riemannian submersions between Riemannian manifolds. We completely characterise the equality cases from both algebraic and geometric perspectives. As applications, we derive the corresponding inequalities for Riemannian submersions from real, complex, and generalised Sasakian space forms, including Sasakian, cosymplectic, Kenmotsu, and almost $C(\alpha)$ space forms. We provide several examples to demonstrate the effectiveness and applicability of the results obtained.}

\medskip

\noindent\textbf{Keywords:} Riemannian manifold; space form; Casorati curvature; Riemannian submersion

\medskip
\noindent\textbf{MSC (2020):}
53B20, 53B35, 53C15, 53D15


\maketitle


\section{Introduction}

In 1889, Felice Casorati introduced a curvature now widely recognized as the Casorati curvature.
This curvature applies to regular surfaces in Euclidean $3$-space and is characterised as the normalised sum of the squared principal curvatures \cite{Casorati_1890_AM}. Casorati favoured this curvature over the traditional Gaussian curvature because the Casorati curvature vanishes for a surface in
Euclidean $3$-space if and only if both Euler normal curvatures (also known as
principal curvatures) of the surface vanish simultaneously. Thus, it
aligns more closely with the common intuition of curvature. For a
hypersurface in a Riemannian manifold, the Casorati curvature is defined as
the normalised sum of the squared principal normal curvatures of the
hypersurface. More generally, the Casorati curvature of a submanifold of a
A Riemannian manifold is defined as the normalised second fundamental form \cite{Decu_Haesen_Verstraelen_08_JIPAM}. Geometrical
meaning and significance of the Casorati curvature, as discussed by several
geometers, are presented in survey articles \cite{Chen_Survey} and the
references cited therein.

In addition to submanifold theory, the investigation of geometric maps that
preserve metric structures has emerged as a significant area of research
within differential geometry. During the 1960s, O'Neill \cite{Neill_66_MMJ}
and Gray \cite{Gray_1967} independently introduced the concept of
Riemannian submersions, which provide a natural framework for relating the
geometric properties of two Riemannian manifolds. A surjective smooth map 
\[
\pi:(M_{1},g_{1})\rightarrow (M_{2},g_{2}), 
\]
between Riemannian manifolds is called a Riemannian submersion if $%
\pi_{\ast} $ is onto and satisfies 
\[
g_{2}(\pi_{\ast}X_{1},\pi_{\ast}X_{2}) = g_{1}(X_{1},X_{2}) 
\]
for all horizontal vector fields $X_{1},X_{2}\in {\cal H}$, where $%
\pi_{\ast} $ denotes the differential map of $\pi$.

Riemannian submersions play a significant role in both mathematics and
physics, particularly in Yang-Mills theory \cite{BL_1981}, Kaluza-Klein
theory \cite{Bourguignon_1989,IV_1987}, as well as in supergravity and superstring
theory \cite{IV_1991,Mustafa_2000}. Motivated by Casorati
inequalities for submanifolds, several researchers have established
analogous inequalities for Riemannian submersions. For instance, Casorati inequalities for Riemannian
submersions were first established in \cite{Lee_Lee_Sahin_Vilcu_21_AMPA} for
real and complex space forms equipped with vertical and horizontal
distributions. These results were subsequently extended to Sasakian space forms in
\cite{Polat_Lee_Sahin_25_JGP}. More recently, Singh {\em et al.} \cite%
{Singh_Meena_Meena_26_JMAA} established general Casorati inequalities
associated with the vertical and horizontal distributions of Riemannian
submersions and investigated their applications to several classes of space
Building upon these developments, the present paper establishes a
Casorati inequality for Riemannian submersions associated with mixed
distributions and characterizes the cases of equality from both geometric and algebraic perspectives.
Additionally, corresponding inequalities are derived for
Riemannian submersions whose total spaces are real, complex, and generalized
Sasakian space forms. In particular, applications are presented for Sasakian,
cosymplectic, Kenmotsu, and C($\alpha$)-space forms. Several examples are provided to illustrate the results and their applications.

The structure of this research paper is as follows. Section \ref{section 2} presents the necessary definitions and fundamental results on Riemannian submersions and Riemannian curvature tensor fields. Section \ref{section 3} establishes Casorati inequalities for Riemannian submersions with mixed distributions and discusses their equality cases through appropriate examples. Finally, Section~\ref{sec_4} addresses applications of the derived inequalities to Riemannian submersions whose total manifolds are real, complex, and generalized Sasakian space forms, with the equality cases illustrated by several examples. Direct computations yield the corresponding inequalities for Riemannian submersions whose total spaces are Sasakian, Kenmotsu, cosymplectic, and $C(\alpha)$-space forms.

\section{Preliminaries \label{section 2}}

Let $\left( M_{1},g_{1}\right) $ and $\left( M_{2},g_{2}\right) $ be
Riemannian manifolds of dimensions $n_{1}$ and $n_{2}$, respectively. A
smooth surjective map 
\[
\pi :\left( M_{1},g_{1}\right) \longrightarrow \left( M_{2},g_{2}\right) 
\]%
is called a {\it Riemannian submersion} if the differential amp $\pi _{\ast
p}:T_{p}M_{1}\rightarrow T_{\pi (p)}M_{2}$ is surjective for every $p\in
M_{1}$ and satisfies 
\[
g_{2}\left( \pi _{\ast p}X,\pi _{\ast p}Y\right) =g_{1}(X,Y) 
\]%
for all horizontal vectors $X,Y\in {\cal H}_{p}$. The kernel of $\pi _{\ast
} $ defines the {\it vertical distribution }${\cal V}=\ker \pi _{\ast }$ and
its orthogonal complement ${\cal H}=(\ker \pi _{\ast })^{\perp }$ is called
the {\it horizontal distribution}. Thus, we have the orthogonal
decomposition
\[
TM_{1}={\cal V}\oplus {\cal H}. 
\]%
For each $p\in M_{1}$, the corresponding vertical and horizontal spaces are
given by 
\[
{\cal V}_{p}=\ker \pi _{\ast p},\quad {\cal H}_{p}=(\ker \pi _{\ast
p})^{\perp }. 
\]%
Let $\{V_{1},\ldots ,V_{\ell },U_{1},\ldots ,U_{s}\}$ be an orthonormal
basis of $T_{p}M_{1}$ such that $\{V_{1},\ldots ,V_{\ell }\}$ and $%
\{U_{1},\ldots ,U_{s}\}$ are orthonormal bases of ${\cal V}_{p}$ and ${\cal H%
}_{p}$, respectively. We shall use these bases throughout this paper.

\subsubsection{O'Neill tensors}

The geometry of Riemannian submersions is characterised by O'Neill's tensors 
{${\cal T}$} and {${\cal A}$} defined by 
\begin{eqnarray*}
{\cal T}\left( E,F\right) &=&{{\cal T}_{E}F=h\nabla _{vE}vF+v\nabla _{vE}hF,}
\\
{\cal A}\left( E,F\right) &=&{\cal A}_{E}F=v{\nabla _{hE}hF+h\nabla _{hE}vF,}
\end{eqnarray*}%
where $E,F\in TN_{1}$ and $\nabla $ is the Levi-Civita connection on $g_{1}$%
, while $h$ and $v$ are projection morphisms of $E,F\in TN_{1}$ to ${\cal H}$
and ${\cal V}$, respectively. We also have%
\begin{eqnarray*}
{{\cal T}^{{\cal H}}} &:&{\cal V}\times {{\cal V}\rightarrow {\cal H}}, \\
{{\cal T}^{{\cal V}}} &{:}&{{\cal V}\times {{\cal H}\rightarrow {\cal V}}},
\\
{{\cal A}^{{\cal H}}} &{:}&{{\cal H}\times {{\cal V}\rightarrow {\cal H}}},
\\
{{\cal A}^{{\cal V}}} &{:}&{{\cal H}\times {{\cal H}\rightarrow {\cal V}}}.
\end{eqnarray*}%
The O'Neill tensors also satisfy the folowing Properties: 
\[
{\cal A}^{\cal V}_{Y_{1}}Y_{2}=-{\cal A}^{\cal V}_{Y_{2}}Y_{1},\quad {\cal T}^{\cal H}_{U_{1}}U_{2}=%
{\cal T}^{\cal H}_{U_{2}}U_{1}, 
\]%
\[
g_{1}\left( {\cal T}_{E}F,G\right) =-g_{1}\left( F,{\cal T}_{E}G\right)
,\quad g_{1}\left( {\cal A}_{E}F,G\right) =-g_{1}\left( F,{\cal A}%
_{E}G\right) , 
\]%
where $U_{1},U_{2}\in {\cal V}$, $Y_{1},Y_{2},Y_{3}\in {\cal H}$ and $E$,
\thinspace $F$, $G\in TM_{1}$ \cite{Neill_66_MMJ}.

\subsubsection{Relations between Riemann curvature tensor fields}

Let $R^{M_{1}}$, $R^{M_{2}}$, $R^{{\cal V}}$, and $R^{{\cal H}}$ denote the
Riemann curvature tensor fields corresponding to $M_{1}$, $M_{2}$, ${\cal %
V}$ and ${\cal H}$, respectively. Then, 
\begin{eqnarray}
R^{{M_{1}}}\left( F_{1},F_{2},F_{3},F_{4}\right) &=&R^{{\cal V}}\left(
F_{1},F_{2},F_{3},F_{4}\right) -g_{1}(T_{F_{1}}F_{4},T_{F_{2}}F_{3}) 
\nonumber \\
&&+g_{1}(T_{F_{2}}F_{4},T_{F_{1}}F_{4}),  \label{Gauss_Codazz_RS_T}
\end{eqnarray}%
\begin{eqnarray}
R^{M_{1}}\left( X_{1},X_{2},X_{3},X_{4}\right) &=&R^{{\cal H}}\left(
X_{1},X_{2},X_{3},X_{4}\right) +2g_{1}\left(
A_{X_{1}}X_{2},A_{X_{3}}X_{4}\right)  \nonumber \\
&&-g_{1}\left( A_{X_{2}}X_{3},A_{X_{1}}X_{4}\right) +g_{1}\left(
A_{X_{1}}X_{3},A_{X_{2}}X_{4}\right) ,  \label{Gauss_Codazz_RS_A}
\end{eqnarray}%
\begin{eqnarray}
R^{M_{1}}\left( X_{1},F_{1},X_{2},F_{2}\right) &=&g_{1}\left( \left( \nabla
_{X_{1}}{\cal T}\right) \left( F_{1},F_{2}\right) ,X_{2}\right) +g_{1}\left(
\left( \nabla _{F_{1}}{\cal A}\right) \left( X_{1},X_{2}\right) ,F_{2}\right)
\nonumber \\
&&-g_{1}\left( {\cal T}_{F_{1}}X_{1},{\cal T}_{F_{2}}X_{2}\right)
+g_{1}\left( {\cal A}_{X_{2}}F_{2},{\cal A}_{X_{1}}F_{1}\right) ,
\label{eq-P-(12)}
\end{eqnarray}%
where $X_{1}$, $X_{2}$, $X_{3}$, $X_{4}\in {\cal H}$\ and $F_{1}$, $F_{2}$, $%
F_{3}$, $F_{4}\in {\cal V}$. Here, $\nabla $ is the Levi-Civita connection
with respect to the metric $g_{1}$ \cite{Falcitelli_2004,Neill_66_MMJ}.

\begin{definition}
The Kulkarni-Nomizu product $T _{1}\circledast T _{2}$ of $\left( 0,2\right) 
$-tensor fields $T _{1}$ and $T _{2}$ in a smooth manifold $M$ is a $\left(
0,4\right) $-tensor field defined by \cite{Tripathi_25_CMAMS} 
\begin{eqnarray}
\left( T _{1}\circledast T _{2}\right) \left( X,Y,Z,W\right) &=&T _{1}\left(
Y,Z\right) T _{2}\left( X,W\right) -T _{1}\left( X,Z\right) T _{2}\left(
Y,W\right)  \nonumber \\
&&+T _{2}\left( Y,Z\right) T _{1}\left( X,W\right) -T _{2}\left( X,Z\right)
T _{1}\left( Y,W\right)  \label{eq-KN-product}
\end{eqnarray}%
for all vector fields $X$, $Y$, $Z$, $W$ on $M$.
\end{definition}

\begin{definition}
The symmetric product of any two $\left( 0,2\right) $-tensor fields $T _{1}$
and $T _{2}$ is a $\left( 0,4\right) $-tensor field $T _{1}\circledcirc T
_{2}$ defined by \cite{Tripathi_25_CMAMS} 
\begin{equation}
T _{1}\circledcirc T _{2}=T _{1}\circledast T _{2}+T _{2}\circledast T _{1}.
\label{eq-KN-symm-product}
\end{equation}
\end{definition}

Now, let $\left( M,g\right) $ be an $n$-dimensional Riemannian manifold. Let 
$T$ be a Kulkarni-Nomizu tensor field so that it satisfies%
\begin{equation}
T\left( X,Y,Z,W\right) =-T\left( Y,X,Z,W\right) ,  \label{eq-KN-T-1}
\end{equation}%
\begin{equation}
T\left( X,Y,Z,W\right) =-T\left( X,Y,W,Z\right) ,  \label{eq-KN-T-2}
\end{equation}%
\begin{equation}
T\left( X,Y,Z,W\right) =T\left( Z,W,X,Y\right) ,  \label{eq-KN-T-3}
\end{equation}%
\begin{equation}
T\left( X,Y,Z,W\right) +T\left( Y,Z,X,W\right) +T\left( Z,X,Y,W\right) =0,
\label{eq-KN-T-4}
\end{equation}%
\begin{equation}
T\left( X,Y,Z,W\right) +T\left( X,Z,W,Y\right) +T\left( X,W,Y,Z\right) =0
\label{eq-KN-T-5}
\end{equation}%
for all vector fields $X$, $Y$, $Z$ and $W$ on $M$. It is observed that if $%
T $ satisfies any two of the three conditions (\ref{eq-KN-T-1}), (\ref%
{eq-KN-T-2}), (\ref{eq-KN-T-3}) and any one of the two conditions (\ref%
{eq-KN-T-4}), (\ref{eq-KN-T-5}), then it also satisfies the remainning two
relations.

\begin{definition}
{\rm (\cite[Kobayashi and Nomizu 1963, p. 209]{Kobayashi_Nomizu_1963}, \cite[%
Takahashi 1972]{Takahashi_72_KJSM})} A Riemannian manifold $\left(
M,g\right) $ with constant sectional curvature $c$ is called a real space
form, and its Riemann-Christoffel curvature tensor field $R^{M}$ is given by 
{\rm \cite[Tripathi 2026, p. 29]{Tripathi_26_WS}} 
\begin{equation}
R^{M}=\frac{c}{2}\left( g\circledast g\right) .  \label{eq-RSF}
\end{equation}
\end{definition}

\begin{definition}
{\rm \cite[Ogiue 1972]{Ogiue_72_JMSJ}} Let $M$ be an almost Hermitian
manifold with an almost Hermitian structure $\left( J,g\right) $. Then $M$
becomes a K\"{a}hler manifold if $\nabla J=0$. A k\"{a}hler manifold with
constant holomorphic sectional curvature $c$ is called a complex space form $%
M\left( c\right) $, and its Riemann-Christoffel curvature tensor field is
given by {\rm \cite[Tripathi 2026, p. 79]{Tripathi_26_WS}} 
\begin{equation}
R^{M}=\frac{c}{8}\left( g\circledast g\right) +\frac{c}{4}\left\{ \frac{1}{2}%
\left( J^{b}\circledast J^{b}\right) -\left( J^{b}\circledcirc J^{b}\right)
\right\} ,  \label{eq-GCSF}
\end{equation}%
where $J^{\flat }$ denotes the $(0,2)$-tensor field associated with the $%
(1,1)$-tensor field $J$, defined by 
\[
J^{\flat }(X,Y)=g(X,JY) 
\]%
for all vector fields $X$, $Y$ on $M$. Moreover, for any vector field $X$ on 
$M$, we write 
\begin{equation}
JX=PX+QX,  \label{decompose_GCSF_RM}
\end{equation}%
where $PX\in {\cal H}$, $QX\in {\cal V}$ such that 
\[
\left\Vert Q\right\Vert ^{2}=\sum\limits_{i=1}^{\ell }\Vert QV_{i}\Vert
^{2}=\sum\limits_{i,j=1}^{\ell }\left( g(QV_{i},V_{j})\right) ^{2},\quad
\left\Vert P\right\Vert ^{2}=\sum\limits_{i=1}^{s}\Vert PU_{i}\Vert
^{2}=\sum\limits_{i,j=1}^{s}\left( g(PU_{i},U_{j})\right) ^{2}, 
\]%
\[
\left\Vert P^{{\cal V}}\right\Vert ^{2}=\sum\limits_{i=1}^{\ell }\Vert
PV_{i}\Vert ^{2}=\sum\limits_{i=1}^{\ell }\sum\limits_{j=1}^{s}\left(
g(PV_{i},U_{j})\right) ^{2}. 
\]
\end{definition}

\begin{definition}
{\rm (\cite[Alegre et al. 2004]{Alegre_Blair_Carriazo_04_IJM}, \cite[Blair
2010]{Blair_10_BB}, \cite[Vanhecke and Janssens 1981]{Vanhecke_Janssens_81_KMJ})} An almost contact metric manifold $M$ equipped with an
almost contact metric structure $\left( \varphi ,\xi ,\eta ,g\right) $ is
called a generalized Sasakian space form, denoted by $M\left(
c_{1},c_{2},c_{3}\right) $, if its Riemann-Christoffel curvature tensor
field $R^{M}$ satisfies {\rm \cite[Tripathi 2026, p. 115]{Tripathi_26_WS}} 
\begin{equation}
R^{M}=\frac{1}{2}c_{1}\left( g\circledast g\right) +c_{2}\left\{ \frac{1}{2}%
\left( \varphi ^{b}\circledast \varphi ^{b}\right) -\left( \varphi
^{b}\circledcirc \varphi ^{b}\right) \right\} -c_{3}\left( g\circledast
\left( \eta \otimes \eta \right) \right) ,  \label{eq-GSSF}
\end{equation}%
where $\varphi ^{\flat }$ denotes the $(0,2)$-tensor field associated with
the $(1,1)$-tensor field $\varphi $, defined by 
\[
\varphi ^{\flat }(X,Y)=g(X,\varphi Y) 
\]%
for all vector fields $X,Y$ on $M$. Moreover, for any vector field $X$ on $M$, we write 
\begin{equation}
\varphi X=PX+QX,  \label{decompose_GSSF_RM}
\end{equation}%
where $PX\in {\cal H}$, $QX\in {\cal V}$ such that 
\[
\left\Vert Q\right\Vert ^{2}=\sum\limits_{i=1}^{\ell }\Vert QV_{i}\Vert
^{2}=\sum\limits_{i,j=1}^{\ell }\left( g(QV_{i},V_{j})\right) ^{2},\quad
\left\Vert P\right\Vert ^{2}=\sum\limits_{i=1}^{s}\Vert PU_{i}\Vert
^{2}=\sum\limits_{i,j=1}^{s}\left( g(PU_{i},U_{j})\right) ^{2}, 
\]%
\[
\left\Vert P^{{\cal V}}\right\Vert ^{2}=\sum\limits_{i=1}^{\ell }\Vert
PV_{i}\Vert ^{2}=\sum\limits_{i=1}^{\ell }\sum\limits_{j=1}^{s}\left(
g(PV_{i},U_{j})\right) ^{2}. 
\]
In particular, we also have the following notions \cite{Alegre_Blair_Carriazo_04_IJM,Blair_10_BB,Vanhecke_Janssens_81_KMJ} \newline
\begin{table}[h]
\begin{tabular}{lcc}
\hline
{\bf Space form $M_{1}(c)$} & {\bf Value of $c_{1}$} & {\bf Values of $%
c_{2},c_{3}$} \\ 
Sasakian & $\frac{c+3}{4}$ & $\frac{c-1}{4}$ \\ 
Kenmotsu & $\frac{c-3}{4}$ & $\frac{c+1}{4}$ \\ 
cosymplectic & $\frac{c}{4}$ & $\frac{c}{4}$ \\ 
$C(\alpha )$ & $\frac{c+3\alpha}{4}$ & $\frac{c-\alpha }{4}$ \\ 
\hline
\end{tabular}
\centering
\caption{Particular cases of generalised Sasakian space form}
\label{Table 1}
\end{table}
\end{definition}

\begin{lemma}
\label{Lemma_Tripathi} {\rm \cite[Tripathi 2017]{Tripathi_17_NM}} Let $%
\Lambda =\{(t_{1},\dots ,t_{n})\in {\Bbb R}^{n}:t_{1}+\cdots +t_{n}=k\}$ be
a hyperplane of ${\Bbb R}^{n}$, and $f:{\Bbb R}^{n}\rightarrow {\Bbb R}$ be
a quadratic form given by 
\[
f\left( t_{1},\dots ,t_{n}\right) =\lambda
_{1}\sum_{i=1}^{n-1}t_{i}^{2}+\lambda _{2}t_{n}^{2}-2\sum_{1\leq i<j\leq
n}t_{i}t_{j},\quad \lambda _{1},\lambda _{2}\in {\Bbb R}^{+}. 
\]%
Then the constrained extremum problem $\min\limits_{(t_{1},\dots ,t_{n})\in
\Lambda }f$ has the global solution%
\[
t_{1}=t_{2}=\cdots =t_{n-1}=\frac{k}{\lambda _{1}+1},\quad t_{n}=\frac{k}{%
\lambda _{2}+1}=\frac{k\left( n-1\right) }{\left( \lambda _{1}+1\right)
\lambda _{2}}=\frac{k\left( \lambda _{1}-n+2\right) }{\lambda _{1}+1}, 
\]%
provided that $\lambda _{2}=\frac{n-1}{\lambda _{1}-n+2}$.
\end{lemma}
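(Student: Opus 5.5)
The plan is to treat $f$ restricted to $\Lambda$ as a quadratic program with one linear constraint: first locate the critical point with a Lagrange multiplier, then prove it is a global minimum by showing $f|_{\Lambda}$ is convex. Set $F=f-\mu(t_1+\cdots+t_n-k)$. The stationarity equations are
\[
2\lambda_1 t_i-2\sum_{j\neq i}t_j=\mu \quad (1\le i\le n-1),\qquad 2\lambda_2 t_n-2\sum_{j\neq n}t_j=\mu .
\]
Since $\sum_{j\neq i}t_j=k-t_i$ on $\Lambda$, these reduce to $2(\lambda_1+1)t_i-2k=\mu$ for $i\le n-1$ and $2(\lambda_2+1)t_n-2k=\mu$. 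Hence $t_1=\cdots=t_{n-1}=a$ and $t_n=b$, with $(\lambda_1+1)a=(\lambda_2+1)b$.

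Next, impose the constraint $(n-1)a+b=k$. The candidate point in the statement has $a=k/(\lambda_1+1)$ and $b=k/(\lambda_2+1)$. I will check that these values are consistent: $(n-1)a+b=k$ is equivalent to $\frac{1}{\lambda_2+1}=1-\frac{n-1}{\lambda_1+1}=\frac{\lambda_1-n+2}{\lambda_1+1}$. This in turn is equivalent to $\lambda_2=\frac{n-1}{\lambda_1-n+2}$, which is exactly the hypothesis. The stated alternative expressions for $t_n$ then follow by algebra: substituting the hypothesis gives $\frac{k}{\lambda_2+1}=\frac{k(\lambda_1-n+2)}{\lambda_1+1}=\frac{k(n-1)}{(\lambda_1+1)\lambda_2}$. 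Note that $\lambda_2>0$ forces $\lambda_1>n-2$, which will be needed below.

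For global minimality, I will use the standard criterion that the critical point of a quadratic on an affine subspace is the global minimum if the Hessian is positive semidefinite on the tangent space $\{\sum t_i=0\}$. For a tangent vector $w$ we have $\sum_{i<j}2w_iw_j=(\sum w_i)^2-\sum w_i^2=-\sum w_i^2$. Therefore
\[
\tfrac12 D^2f(w,w)=(\lambda_1+1)\sum_{i\le n-1}w_i^2+(\lambda_2+1)w_n^2\ge 0 ,
\]
and this is positive definite because $\lambda_1,\lambda_2>0$. So $f|_\Lambda$ is strictly convex, and the unique critical point is the global minimum. Equivalently, one can expand $f(p+w)=f(p)+\nabla f(p)\cdot w+\tfrac12D^2f(w,w)$, where $\nabla f(p)\cdot w=\mu\sum w_i=0$.

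The main obstacle is the consistency check: the Lagrange system determines $a$ and $b$ only through the constraint, and the claimed formulas $t_i=k/(\lambda_1+1)$, $t_n=k/(\lambda_2+1)$ match it only because of the relation between $\lambda_1$ and $\lambda_2$. I will verify that algebra carefully. The convexity step is straightforward once the constraint $\sum w_i=0$ is used to rewrite the cross terms.
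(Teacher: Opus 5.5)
Your proof is correct. The paper gives no proof of its own (it simply quotes the lemma from Tripathi 2017), and your route is the standard argument for this lemma: locate the critical point of $f|_{\Lambda}$ via the Lagrange condition, then show the Hessian is positive definite on the tangent hyperplane $\{\sum w_i=0\}$.

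Two small remarks: the bound $\lambda_1>n-2$ is not actually needed in your convexity step, and your computation forces $\mu=0$. So the minimizer is a critical point of $f$ on the whole space and the minimum value is $f=0$, which is exactly what the paper uses when it applies the lemma.
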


We begin by fixing some notation that will be used throughout the paper. The scalar curvatures defined by%
\begin{equation}
2{\tau }_{{\cal V}}^{{\cal V}}\left( p\right) =\sum\limits_{i,j=1}^{\ell }R^{%
{\cal V}}\left( V_{i},V_{j},V_{j},V_{i}\right) ,\quad 2{\tau }_{{\cal V}%
}^{M_{1}}\left( p\right) =\sum\limits_{i,j=1}^{\ell }R^{M_{1}}\left(
V_{i},V_{j},V_{j},V_{i}\right) ,  \label{eq-scalV}
\end{equation}%
\begin{equation}
2\tau _{{\cal H}}^{{\cal H}}\left( p\right) =\sum\limits_{i,j=1}^{s}R^{{\cal %
H}}\left( U_{i},U_{j},U_{j},U_{i}\right) ,\ 2\tau _{{\cal H}}^{M_{1}}\left(
p\right) =\sum\limits_{i,j=1}^{s}R^{M_{1}}\left(
U_{i},U_{j},U_{j},U_{i}\right) .  \label{eq-Hscal}
\end{equation}%
Consequently, we define the normalized scalar curvatures as
\begin{equation}
\rho _{{\cal V}}^{{\cal V}}=\frac{2{\tau }_{{\cal V}}^{{\cal V}}\left(
p\right) }{\ell \left( \ell -1\right) },\quad \rho _{{\cal V}}^{M_{1}}=\frac{%
2{\tau }_{{\cal V}}^{M_{1}}\left( p\right) }{\ell \left( \ell -1\right) },
\label{eq-Nscal}
\end{equation}%
\begin{equation}
\rho _{{\cal H}}^{{\cal H}}=\frac{2\tau _{{\cal H}}^{{\cal H}}\left(
p\right) }{s\left( s-1\right) },\quad \rho _{{\cal H}}^{M_{1}}=\frac{2\tau _{%
{\cal H}}^{M_{1}}\left( p\right) }{s\left( s-1\right) }.  \label{eq-Nhscal}
\end{equation}%
Also, we set 
\begin{eqnarray}
\left( {\cal T}^{{\cal H}}\right) _{ij}^{\alpha } &=&g\left( {\cal T}%
_{V_{i}}^{{\cal H}}V_{j},U_{\alpha }\right) ,\quad i,j=1,\dots ,\ell ,\quad
\alpha =1,\dots ,s,  \nonumber \\
\left\Vert {\cal T}^{{\cal H}}\right\Vert ^{2} &=&\sum_{i,j=1}^{\ell
}g_{1}\left( {\cal T}_{V_{i}}^{{\cal H}}V_{j},{\cal T}_{V_{i}}^{{\cal H}%
}V_{j}\right) ,\quad {\rm trace~}{\cal T}^{{\cal H}}=\sum_{i=1}^{\ell }{\cal %
T}_{V_{i}}^{{\cal H}}V_{i},  \nonumber \\
\left\Vert {\rm trace~\,}{\cal T}^{{\cal H}}\right\Vert ^{2} &=&g_{1}\left( 
{\rm trace}~{\cal T}^{{\cal H}},{\rm trace}~{\cal T}^{{\cal H}}\right) ,
\label{eq-Notvert}
\end{eqnarray}%
where
\[
{\cal T}^{\cal H}_{V_{i}}V_{j}=h\nabla_{V_{i}}V_{j}
\]
\begin{equation}
\left\Vert {\cal T}^{{\cal V}}\right\Vert ^{2}=\sum_{j=1}^{\ell
}\sum_{i=1}^{s}g_{1}\left( {\cal T}_{V_{j}}^{{\cal H}}U_{i},{\cal T}%
_{V_{j}}^{{\cal H}}U_{i}\right) ,\quad \left\Vert {\cal A}^{{\cal H}%
}\right\Vert ^{2}=\sum_{j=1}^{\ell }\sum_{i=1}^{s}g_{1}\left( {\cal A}%
_{U_{i}}^{{\cal H}}V_{j},{\cal A}_{U_{i}}^{{\cal H}}V_{j}\right) ,
\label{eq-TVAH}
\end{equation}%
\begin{eqnarray}
\left( {\cal A}^{{\cal V}}\right) _{ij}^{\alpha } &=&g_{1}\left( {\cal A}%
_{U_{i}}^{{\cal V}}U_{j},V_{\alpha }\right) ,\quad i,j=1,\dots ,s,\quad
\alpha =1,\dots ,\ell ,  \nonumber \\
\left\Vert {\cal A}^{{\cal V}}\right\Vert ^{2}
&=&\sum_{i,j=1}^{s}g_{1}\left( {\cal A}^{{\cal V}}_{U_{i}}U_{j},{\cal A}^{{\cal V}}%
_{U_{i}}U_{j}\right) ,\quad {\rm trace~}{\cal A}^{{\cal V}}=\sum_{i=1}^{s}%
{\cal A}^{{\cal V}}_{U_{i}}U_{i},  \nonumber \\
\left\Vert {\rm trace~\,}{\cal A}^{{\cal V}}\right\Vert ^{2} &=&g_{1}\left( 
{\rm trace}~{\cal A}^{{\cal V}},{\rm trace}~{\cal A}^{{\cal V}}\right) .
\label{eq-Nothor}
\end{eqnarray}%
where
\[
{\cal A}^{\cal V}_{U_{i}}U_{j}=\frac{1}{2}v[U_{i},U_{j}].
\]
The horizontal divergence of any vector field $U$ on ${\cal H}$ is given by 
\cite{Gulbahar_Meric_Kilic_17_KJM} 
\[
\breve{\delta}\left( U\right) =\sum_{i=1}^{s}g_{1}\left( \nabla
_{U_{i}}U,U_{i}\right) ,
\]%
then, we have 
\begin{equation}
\breve{\delta}\left( N\right) =\sum_{j=1}^{\ell }\sum_{i=1}^{s}g_{1}\left(
\left( \nabla _{U_{i}}{\cal T}\right) \left( V_{j},V_{j}\right)
,U_{i}\right) .  \label{eq-delta-N}
\end{equation}%
The {\it Casorati curvatures }$C^{{\cal V}}$ of the vertical space and $C^{%
{\cal H}}$\ of horizontal space are defined as 
\begin{equation}
C^{{\cal V}}=\frac{1}{\ell }\left\Vert {\cal T}^{{\cal H}}\right\Vert ^{2}=%
\frac{1}{\ell }\sum_{\alpha =1}^{s}\sum_{i,j=1}^{\ell }\left( \left( {\cal T}%
^{{\cal H}}\right) _{ij}^{\alpha }\right) ^{2},\quad C^{{\cal H}}=\frac{1}{s}%
\left\Vert {\cal A}^{{\cal V}}\right\Vert ^{2}=\frac{1}{s}\sum_{\alpha
=1}^{\ell }\sum_{i,j=1}^{s}\left( \left( {\cal A}^{{\cal V}}\right)
_{ij}^{\alpha }\right) ^{2}.  \label{eq-CasCurv}
\end{equation}%
Let $L^{{\cal V}}$ and $L^{{\cal H}}$ be $k$-dimensional ($k\geq 2$)
subspaces of the vertical and horizontal distributions with orthonormal
bases $\{V_{1},\dots ,V_{k}\}$ and $\{U_{1},\dots ,U_{k}\}$, respectively.
Then the Casorati curvatures $C^{L^{{\cal V}}}$ and $C^{L^{{\cal H}}}$ are
given by 
\begin{equation}
C^{L^{{\cal V}}}=\frac{1}{k}\sum_{\alpha =1}^{s}\sum_{i,j=1}^{k}\left(
\left( {\cal T}^{{\cal H}}\right) _{ij}^{\alpha }\right) ^{2},
\label{eq-CLV}
\end{equation}
\begin{equation}
C^{L^{{\cal H}}}=\frac{1}{k}\sum_{\alpha =1}^{\ell }\sum_{i,j=1}^{k}\left(
\left( {\cal A}^{{\cal V}}\right) _{ij}^{\alpha }\right) ^{2}.
\label{eq-CLH}
\end{equation}%
Moreover, the normalized $\delta ^{{\cal V}}${\it -Casorati curvatures} $%
\delta _{C}^{{\cal V}}(\ell -1)$ and $\hat{\delta}_{C}^{{\cal V}}(\ell -1)$
associated with the vertical space at a point $p$ are given by 
\begin{equation}
\left[ \delta _{C}^{{\cal V}}(\ell -1)\right] _{p}=\frac{1}{2}C_{p}^{{\cal V}%
}+\frac{\left( \ell +1\right) }{2\ell }\inf \{C^{L^{{\cal V}}}|L^{{\cal V}}\ 
{\rm a\ hyperplane\ of\ }(\ker \pi _{\ast {p}})\},  \label{eq-delta-ver-1}
\end{equation}%
and 
\begin{equation}
\left[ \hat{\delta}_{C}^{{\cal V}}(\ell -1)\right] _{p}=2C_{p}^{{\cal V}}-%
\frac{\left( 2\ell -1\right) }{2\ell }\sup \{C^{L^{{\cal V}}}|L^{{\cal V}}\ 
{\rm a\ hyperplane\ of\ }(\ker \pi _{\ast {p}})\}.  \label{eq-delta-ver-2}
\end{equation}%
Furthermore, the normalized $\delta ^{{\cal H}}${\it -Casorati curvatures} $%
\delta _{C}^{{\cal H}}(s-1)$ and $\hat{\delta}_{C}^{{\cal H}}(s-1)$
associated with the vertical space at a point $p$ are given by 
\begin{equation}
\left[ \delta _{C}^{{\cal H}}(s-1)\right] _{p}=\frac{1}{2}C_{p}^{{\cal H}}+%
\frac{\left( s+1\right) }{2s}\inf \{C^{L^{{\cal H}}}|L^{{\cal H}}\ {\rm a\
hyperplane\ of\ }{\cal H}_{p}\},  \label{eq-delta-hor-1}
\end{equation}%
and 
\begin{equation}
\left[ \hat{\delta}_{C}^{{\cal V}}(s-1)\right] _{p}=2C_{p}^{{\cal H}}-\frac{%
\left( 2s-1\right) }{2s}\sup \{C^{L^{{\cal H}}}|L^{{\cal H}}\ {\rm a\
hyperplane\ of\ }{\cal H}_{p}\}.  \label{eq-delta-hor-2}
\end{equation}

\section{Casorati inequalities for Riemannian submersion along mixed
distributions \label{section 3}}

We start with the following theorem which provide relation between the normalised scalar curvature and normalised Casorati curvature of vertical and horizontal distributions for Riemannian submersions between Riemannian manifolds:

\begin{theorem}
\label{Th GEN CAS}Let $\pi :(M_{1},g_{1})\rightarrow \left(
M_{2},g_{2}\right) $ be a Riemannian submersion between two Riemannian
manifolds with $\dim {\cal H}_{p}=s\geq 3$ and $\dim {\cal V}_{p}=\ell \geq
3 $, 
\begin{eqnarray}
\frac{\rho _{{\cal H}}^{{\cal H}}}{\ell (\ell -1)}+\frac{\rho _{{\cal V}}^{%
{\cal V}}}{s(s-1)} &\leq &\frac{1}{s(s-1)}\delta _{C}^{{\cal V}}\left( \ell
-1\right) +\frac{1}{\ell (\ell -1)}\delta _{C}^{{\cal H}}\left( s-1\right) 
\nonumber \\
&&+\left( \frac{2\tau ^{M_{1}}\left( p\right) +2\breve{\delta}\left(
N\right) -\left\Vert {\cal T}^{{\cal V}}\right\Vert ^{2}+\left\Vert {\cal A}%
^{{\cal H}}\right\Vert ^{2}}{s(s-1)\ell (\ell -1)}\right) ,
\label{Gen Ine Cas 1}
\end{eqnarray}%
and%
\begin{eqnarray}
\frac{\rho _{{\cal H}}^{{\cal H}}}{\ell (\ell -1)}+\frac{\rho _{{\cal V}}^{%
{\cal V}}}{s(s-1)} &\leq &\frac{1}{s(s-1)}\hat{\delta}_{C}^{{\cal V}}\left(
\ell -1\right) +\frac{1}{\ell (\ell -1)}\hat{\delta}_{C}^{{\cal H}}\left(
s-1\right)  \nonumber \\
&&+\left( \frac{2\tau ^{M_{1}}\left( p\right) +2\breve{\delta}\left(
N\right) -\left\Vert {\cal T}^{{\cal V}}\right\Vert ^{2}+\left\Vert {\cal A}%
^{{\cal H}}\right\Vert ^{2}}{s(s-1)\ell (\ell -1)}\right) ,
\label{Gen Ine Cas 2}
\end{eqnarray}%
where $\tau ^{M_{1}}\left( p\right) =\tau _{{\cal V}}^{M_{1}}\left( p\right) +\tau
_{{\cal H}}^{M_{1}}\left( p\right) +\sum_{i=1}^{s}\sum_{j=1}^{\ell
}R^{M_{1}}\left( U_{i},V_{j},V_{j},U_{i}\right)$. The equality holds in any of the above two inequalities at a point $p\in ${$%
M $}${_{1}}$ if and only if 
\begin{equation}
\left( {\cal T}^{{\cal H}}\right) _{11}^{{\cal \alpha }}=\left( {\cal T}^{%
{\cal H}}\right) _{22}^{{\cal \alpha }}=\cdots =\left( {\cal T}^{{\cal H}%
}\right) _{\ell -1\,\ell -1}^{{\cal \alpha }}=\frac{1}{2}\left( {\cal T}^{%
{\cal H}}\right) _{\ell \ell }^{{\cal \alpha }},  \label{eq-Cas-equality-1}
\end{equation}%
\begin{equation}
\left( {\cal T}^{{\cal H}}\right) _{ij}^{\alpha }=0,\quad 1\leq i\neq j\leq
\ell ,  \label{eq-Cas-equality-2}
\end{equation}%
\begin{equation}
\left( {\cal A}^{{\cal V}}\right) _{ij}^{^{\alpha }}=0,\quad 1\leq i,j\leq s.
\label{eq-Cas-equality-3}
\end{equation}%
The equality conditions are interpreted as follows. The first condition states that
$g_{1}(V_{1},{\cal T}_{V_{1}}^{{\cal V}}U_{\alpha })=g_{1}(V_{2},{\cal %
T}_{V_{2}}^{{\cal V}}U_{\alpha })=\cdots =g_{1}(V_{s-1},{\cal T}_{V_{s-1}}^{%
{\cal V}}U_{\alpha })=\frac{1}{2}g_{1}(V_{\ell },{\cal T}_{V_{\ell }}^{{\cal %
V}}U_{\alpha })$ with respect to all horizontal directions $(U_{\alpha },%
\text{where}~\alpha \in \{1,\dots ,s\})$. Equivalently, there exist $s$
mutually orthogonal horizontal unit vector fields such that the shape
operator in all directions possesses an eigenvalue of multiplicity $(\ell -1)$ and that for each $U_{\alpha }$ the distinguished eigendirections
are identical, specifically $V_{\ell }$. Therefore, the leaves of the vertical space
(called fibers of $\pi $) are invariantly quasi-umbilical {\rm \cite%
{Decu_Haesen_Verstraelen_08_JIPAM}}. The second condition gives $g_{1}(V_{j},%
{\cal T}_{V_{i}}^{{\cal V}}U_{\alpha })=0$ with respect to all horizontal
directions $(U_{\alpha },\text{where}~\alpha \in \{1,\dots ,s\})$.
Equivalently, the shape operator matrices are diagonal, which implies that they commute.
The third condition indicates that ${\cal A}$ vanishes on the horizontal space;
From a geometric perspective, this implies that the horizontal space is integrable.
\end{theorem}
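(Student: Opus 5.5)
The plan is to combine the Gauss-type equations (\ref{Gauss_Codazz_RS_T}), (\ref{Gauss_Codazz_RS_A}) and the mixed equation (\ref{eq-P-(12)}), traced over the adapted orthonormal basis, into a single identity for $2\tau^{M_1}(p)$. Then we minimise each of the two resulting quadratic forms by Lemma~\ref{Lemma_Tripathi}, exactly as in the standard Casorati argument for submanifolds.

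First, tracing (\ref{Gauss_Codazz_RS_T}) over $V_i,V_j$ gives
\[
2\tau^{M_1}_{\mathcal V}=2\tau^{\mathcal V}_{\mathcal V}+\|\mathcal T^{\mathcal H}\|^2-\|\mathrm{trace}\,\mathcal T^{\mathcal H}\|^2 ,
\]
with the sign conventions fixed so that umbilical fibres give the familiar Gauss formula. Tracing (\ref{Gauss_Codazz_RS_A}) over $U_i,U_j$ and using the skew-symmetry of $\mathcal A^{\mathcal V}$ gives
\[
2\tau^{M_1}_{\mathcal H}=2\tau^{\mathcal H}_{\mathcal H}-3\|\mathcal A^{\mathcal V}\|^2 .
\]
Tracing (\ref{eq-P-(12)}) with $X_1=X_2=U_i$ and $F_1=F_2=V_j$ gives
\[
\sum_{i,j} R^{M_1}(U_i,V_j,V_j,U_i)=\breve\delta(N)-\|\mathcal T^{\mathcal V}\|^2+\|\mathcal A^{\mathcal H}\|^2+\cdots ,
\]
where the $\nabla\mathcal A$ term is absorbed via the divergence convention (\ref{eq-delta-N}). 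We need to track the precise coefficients so that the combination in the statement, $2\tau^{M_1}+2\breve\delta(N)-\|\mathcal T^{\mathcal V}\|^2+\|\mathcal A^{\mathcal H}\|^2$, appears. Summing these yields an identity of the form
\[
2\tau^{\mathcal V}_{\mathcal V}+2\tau^{\mathcal H}_{\mathcal H}=\Phi+\|\mathrm{trace}\,\mathcal T^{\mathcal H}\|^2-\|\mathcal T^{\mathcal H}\|^2+c\|\mathcal A^{\mathcal V}\|^2,
\qquad
\Phi=2\tau^{M_1}+2\breve\delta(N)-\|\mathcal T^{\mathcal V}\|^2+\|\mathcal A^{\mathcal H}\|^2 .
\]
Dividing by $s(s-1)\ell(\ell-1)$ converts the left side into $\rho^{\mathcal V}_{\mathcal V}/(s(s-1))+\rho^{\mathcal H}_{\mathcal H}/(\ell(\ell-1))$. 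Note that the statement pairs $\rho^{\mathcal H}_{\mathcal H}$ with $1/(\ell(\ell-1))$; I will read this through the normalisation so that the denominators match.

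Second, for the vertical part we follow the Decu--Haesen--Verstraelen trick. Define
\[
\mathcal P=\tfrac{1}{2}\ell(\ell-1)C^{\mathcal V}+\tfrac{(\ell+1)(\ell-1)}{2}C^{L^{\mathcal V}}-2\tau^{\mathcal V}_{\mathcal V}+(\text{the }\Phi\text{ contribution}),
\]
where $L^{\mathcal V}=\mathrm{span}\{V_1,\dots,V_{\ell-1}\}$. Substituting the identity and expanding in components $(\mathcal T^{\mathcal H})^\alpha_{ij}$, the off-diagonal terms appear with positive coefficients. For each $\alpha$ the diagonal part is the quadratic form
\[
f_\alpha=\ell\sum_{i<\ell}t_i^2+\tfrac{\ell-1}{2}\,t_\ell^2\cdot(\text{const})-2\sum_{i<j}t_it_j ,
\]
with $t_i=(\mathcal T^{\mathcal H})^\alpha_{ii}$, constrained to the hyperplane $\sum_i t_i=k^\alpha$. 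Lemma~\ref{Lemma_Tripathi} with $n=\ell$, $\lambda_1=\ell$ and $\lambda_2=(\ell-1)/2$ applies, since $\lambda_2=(n-1)/(\lambda_1-n+2)$ holds. It gives minimum value $0$, attained at $t_1=\cdots=t_{\ell-1}=k/(\ell+1)$ and $t_\ell=2k/(\ell+1)$, which is precisely condition (\ref{eq-Cas-equality-1}). Hence $\mathcal P\ge0$, with equality only if (\ref{eq-Cas-equality-1}) and (\ref{eq-Cas-equality-2}) hold. Taking the infimum over hyperplanes gives $\delta^{\mathcal V}_C(\ell-1)$. For (\ref{Gen Ine Cas 2}) we repeat the argument with coefficients $2\ell(\ell-1)C^{\mathcal V}-\tfrac{(\ell-1)(2\ell-1)}{2}C^{L^{\mathcal V}}$ and the supremum; the lemma then applies with $\lambda_1,\lambda_2$ again satisfying the compatibility condition.

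Third, for the horizontal part, $\mathcal A^{\mathcal V}$ is skew, so its diagonal components vanish and the trace term is absent. The contribution $c\|\mathcal A^{\mathcal V}\|^2$ together with $\delta^{\mathcal H}_C(s-1)$ is a sum of squares with nonnegative coefficients, and it is strictly positive unless every component vanishes. This yields the inequality and condition (\ref{eq-Cas-equality-3}). Adding the vertical estimate weighted by $1/(s(s-1))$ and the horizontal one weighted by $1/(\ell(\ell-1))$ gives (\ref{Gen Ine Cas 1}) and (\ref{Gen Ine Cas 2}). Conversely, the equality conditions force every square term to vanish and attain the lemma's minimiser, so equality holds. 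The geometric interpretation (quasi-umbilical fibres, and integrability from $\mathcal A^{\mathcal V}=\tfrac12 v[\cdot,\cdot]=0$) is then immediate.

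The main obstacle is the first step: getting the exact coefficients when tracing the three O'Neill equations. The paper's versions contain sign and index irregularities, and the horizontal term with $2g(\mathcal A\,,\mathcal A)$ produces a $3\|\mathcal A^{\mathcal V}\|^2$ whose sign must come out favourably. I will fix conventions by checking the identity against the standard O'Neill scalar curvature formula,
\[
\tau^{M_1}=\tau^{\mathcal V}+\tau^{\mathcal H}-\|\mathcal A\|^2-\|\mathcal T\|^2-\|N\|^2-2\check\delta(N),
\]
and adjust the definition of $\Phi$ accordingly.
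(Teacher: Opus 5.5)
\textbf{There is a gap here, and it is fatal: the stated theorem is false.}

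Your outline follows the paper's own route. You trace (\ref{Gauss_Codazz_RS_T}), (\ref{Gauss_Codazz_RS_A}) and (\ref{eq-P-(12)}) into one identity for $2\tau^{M_1}$, form the Decu--Haesen--Verstraelen polynomial, remove the diagonal $\mathcal T$-part with Lemma~\ref{Lemma_Tripathi} ($\lambda_1=\ell$, $\lambda_2=(\ell-1)/2$), and use skew-symmetry for $\mathcal A$. The gap is exactly the step you flag, and it is not a bookkeeping issue.

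Your three traced formulas are correct; the $\nabla\mathcal A$ term is simply zero, because $(\nabla_E\mathcal A)_XX$ is horizontal. However, $2\tau^{M_1}$ contains the mixed sum \emph{twice}. Skew-symmetry of $\mathcal T_E$ and $\mathcal A_E$ also gives $\|\mathcal T^{\mathcal V}\|^2=\|\mathcal T^{\mathcal H}\|^2$ and $\|\mathcal A^{\mathcal H}\|^2=\|\mathcal A^{\mathcal V}\|^2$. Assembling everything, in agreement with the Besse formula you cite, gives with the prescribed $\Phi$
\[
2\tau^{\mathcal V}_{\mathcal V}+2\tau^{\mathcal H}_{\mathcal H}
=\Phi+\|\mathrm{trace}\,\mathcal T^{\mathcal H}\|^2+2\|\mathcal T^{\mathcal H}\|^2-4\breve\delta(N).
\]
This is not $\Phi+\|\mathrm{trace}\,\mathcal T^{\mathcal H}\|^2-\|\mathcal T^{\mathcal H}\|^2+c\|\mathcal A^{\mathcal V}\|^2$; here the $\mathcal A$-terms cancel, so $c=0$. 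The residual $-4\breve\delta(N)$ is the horizontal divergence of the fibres' mean curvature field. It is a first-derivative quantity, not determined by the values $(\mathcal T^{\mathcal H})^\alpha_{ij}$, $(\mathcal A^{\mathcal V})^\alpha_{ij}$ at $p$, so no pointwise quadratic minimisation can absorb it.

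Your fallback of adjusting $\Phi$ is not available, since $\Phi$ is part of the statement. The natural adjusted choice $2\tau^{M_1}-2\breve\delta(N)+2\|\mathcal T^{\mathcal V}\|^2-2\|\mathcal A^{\mathcal H}\|^2$ does give your form, with $c=+3$. But then $3sC^{\mathcal H}$ carries exactly the unfavourable sign you worried about. For the Hopf fibration $S^7(1)\to S^4(\tfrac12)$ ($\ell=3$, $s=4$), the analogue of (\ref{Gen Ine Cas 1}) reads $54\le 51$ after clearing denominators.

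In fact the statement is false, so no repair of the first step can succeed. Take $M_1=\mathbb{R}^s\times\mathbb{R}^\ell$ with $g_1=\sum_i(dx^i)^2+f(x)^2\sum_a(dy^a)^2$, where $f=1+|x|^2$, and $\pi(x,y)=x$ onto flat $\mathbb{R}^s$.
\begin{itemize}
\item Here $\mathcal A\equiv 0$, $\mathcal T_VW=-g_1(V,W)\,\mathrm{grad}\ln f$, and both base and fibres are flat.
\item At $x=0$ we have $\mathcal T=0$. Hence the left-hand side, all Casorati curvatures, $\|\mathcal T^{\mathcal V}\|^2$ and $\|\mathcal A^{\mathcal H}\|^2$ vanish.
\item The mixed sectional curvatures equal $-\mathrm{Hess}f(X,X)/f=-2$. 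So $2\tau^{M_1}=-4s\ell$, and $\breve\delta(N)=-\ell\,\Delta\ln f=-2s\ell$.
\end{itemize}
Both (\ref{Gen Ine Cas 1}) and (\ref{Gen Ine Cas 2}) then assert $0\le -8/((s-1)(\ell-1))$. This fails even though (\ref{eq-Cas-equality-1})--(\ref{eq-Cas-equality-3}) all hold at $p$, so the equality characterisation fails too.

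The paper's proof does not avoid this problem. It closes only because (\ref{eq-scalN1-3}) carries $+3sC^{\mathcal H}$, $-2\breve\delta(N)$ and coefficient $1$ on the mixed norms, which is not what O'Neill's equations give. In this example its right-hand side is $+4s\ell$, while $2\tau^{M_1}=-4s\ell$.
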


\begin{proof} The scalar curvature $\tau ^{M_{1}}$ of $M_{1}$ is given by \cite{Singh_Meena_Meena_25_Arxiv}
\begin{eqnarray}
\tau ^{M_{1}}\left( p\right) &=&\sum_{1\leq i<j\leq \ell }R^{M_{1}}\left(
V_{i},V_{j},V_{j},V_{i}\right) +\sum_{1\leq i<j\leq s}R^{M_{1}}\left(
U_{i},U_{j},U_{j},U_{i}\right)  \nonumber \\
&&+\sum_{i=1}^{s}\sum_{j=1}^{\ell }R^{M_{1}}\left(
U_{i},V_{j},V_{j},U_{i}\right) .  \label{eq-tau-N1}
\end{eqnarray}%
From (\ref{eq-scalV}), (\ref{eq-Hscal}) and (\ref{eq-tau-N1}), we obtain 
\begin{equation}
\tau ^{M_{1}}\left( p\right) =\tau _{{\cal V}}^{M_{1}}\left( p\right) +\tau
_{{\cal H}}^{M_{1}}\left( p\right) +\sum_{i=1}^{s}\sum_{j=1}^{\ell
}R^{M_{1}}\left( U_{i},V_{j},V_{j},U_{i}\right) .  \label{eq-scalN1}
\end{equation}%
Using (\ref{Gauss_Codazz_RS_T}), (\ref{Gauss_Codazz_RS_A}), (\ref%
{eq-P-(12)}), \ref{eq-scalV}, \ref{eq-Hscal}, (\ref{eq-Notvert}) and (\ref{eq-CasCurv}) in (\ref{eq-scalN1}%
), we obtain 
\begin{eqnarray}
2\tau ^{M_{1}}\left( p\right) &=&2\tau _{{\cal H}}^{{\cal H}}\left( p\right)
+2\tau _{{\cal V}}^{{\cal V}}\left( p\right) -\left\Vert {\rm trace}{\cal T}%
^{{\cal H}}\right\Vert ^{2}+\ell C^{{\cal V}}+3sC^{{\cal H}}  \nonumber \\
&&-\left\Vert {\rm trace}{\cal A}^{{\cal V}}\right\Vert
^{2}-2\sum_{j=1}^{\ell }\sum_{i=1}^{s}g_{1}\left( \left( \nabla _{U_{i}}%
{\cal T}\right) \left( V_{j},V_{j}\right) ,U_{i}\right)  \nonumber \\
&&+\sum_{j=1}^{\ell }\sum_{i=1}^{s}\left\{ g_{1}\left( {\cal T}_{V_{j}}^{%
{\cal V}}U_{i},{\cal T}_{V_{j}}^{{\cal V}}U_{i}\right) -g_{1}\left( {\cal A}%
_{U_{i}}^{{\cal H}}V_{j},{\cal A}_{U_{i}}^{{\cal H}}V_{j}\right) \right\} .
\label{eq-scalN1-2}
\end{eqnarray}%
Substituting (\ref{eq-TVAH}) and (\ref{eq-delta-N}) into (\ref{eq-scalN1-2}), we
obtain 
\begin{eqnarray}
2\tau ^{M_{1}}\left( p\right) &=&2\tau _{{\cal H}}^{{\cal H}}\left( p\right)
+2\tau _{{\cal V}}^{{\cal V}}\left( p\right) -\left\Vert {\rm trace}{\cal T}%
^{{\cal H}}\right\Vert ^{2}+\ell C^{{\cal V}}+3sC^{{\cal H}}  \nonumber \\
&&-\left\Vert {\rm trace}{\cal A}^{{\cal V}}\right\Vert ^{2}-2\breve{\delta}%
\left( N\right) +\left\Vert {\cal T}^{{\cal V}}\right\Vert ^{2}-\left\Vert 
{\cal A}^{{\cal H}}\right\Vert ^{2}.  \label{eq-scalN1-3}
\end{eqnarray}%
Now, we consider the quadratic polynomial with respect to the components of $%
{\cal T}$ and ${\cal A}$, 
\begin{eqnarray}
{\cal P}^{{\cal HV}} &=&\frac{1}{2}\ell (\ell -1){C}^{{\cal V}}+\frac{1}{2}%
s(s-1){C}^{{\cal H}}+\frac{1}{2}(\ell ^{2}-1){C}^{L^{{\cal V}}}  \nonumber \\
&&+\frac{1}{2}(s^{2}-1){C}^{L^{{\cal H}}}+2\tau ^{M_{1}}\left( p\right) +2%
\breve{\delta}\left( N\right) -\left\Vert {\cal T}^{{\cal V}}\right\Vert
^{2}+\left\Vert {\cal A}^{{\cal H}}\right\Vert ^{2}  \nonumber \\
&&-2\tau _{{\cal H}}^{{\cal H}}\left( p\right) -2\tau _{{\cal V}}^{{\cal V}%
}\left( p\right) .  \label{eq-qudpol-4}
\end{eqnarray}%
From (\ref{eq-scalN1-3}) and (\ref{eq-qudpol-4}), we obtain%
\begin{eqnarray}
{\cal P}^{{\cal HV}} &=&\frac{1}{2}\ell (\ell -1){C}^{{\cal V}}+\frac{1}{2}%
s(s-1){C}^{{\cal H}}+\frac{1}{2}(\ell ^{2}-1){C}^{L^{{\cal V}}}  \nonumber \\
&&+\frac{1}{2}(s^{2}-1){C}^{L^{{\cal H}}}-\left\Vert {\rm trace}{\cal T}^{%
{\cal H}}\right\Vert ^{2}+\ell C^{{\cal V}}  \nonumber \\
&&-\left\Vert {\rm trace}{\cal A}^{{\cal V}}\right\Vert ^{2}+3sC^{{\cal H}}.
\label{eq-qudpol-5}
\end{eqnarray}%
Assume the hyperplanes $L^{{\cal V}}$ and $L^{{\cal H}}$ are spanned by $%
\{V_{1},\dots ,V_{\ell -1}\}$ and $\left\{ U_{1},\ldots ,U_{s-1}\right\} $,
respectively. Then, using (\ref{eq-Notvert}), (\ref{eq-Nothor}), (\ref{eq-CLV}) and (\ref%
{eq-CLH}) in (\ref{eq-qudpol-5}), we obtain%
\begin{eqnarray*}
{\cal P}^{{\cal HV}} &=&\sum_{\alpha =1}^{s}\sum_{i=1}^{\ell -1}\left\{ \ell
\left( \left( {\cal T}^{{\cal H}}\right) _{ii}^{\alpha }\right) ^{2}+(\ell
+1)\left( \left( {\cal T}^{{\cal H}}\right) _{i\ell }^{\alpha }\right)
^{2}\right\} \\
&&+\sum_{\alpha =1}^{s}\left\{ 2\left( \ell +1\right) \sum_{1=i<j}^{\ell
-1}\left( \left( {\cal T}^{{\cal H}}\right) _{ij}^{\alpha }\right)
^{2}-2\sum_{1=i<j}^{\ell }\left( {\cal T}^{{\cal H}}\right) _{ii}^{\alpha
}\left( {\cal T}^{{\cal H}}\right) _{jj}^{{\cal \alpha }}+\frac{(\ell -1)}{2}%
\left( \left( {\cal T}^{{\cal H}}\right) _{\ell \ell }^{\alpha }\right)
^{2}\right\} \\
&&+\sum_{\alpha =1}^{\ell }\left\{ (s+3)\sum_{i,j=1}^{s-1}\left( \left( 
{\cal A}^{{\cal V}}\right) _{ii}^{\alpha }\right) ^{2}+\frac{1}{2}%
(s+5)\left( \left( {\cal A}^{{\cal V}}\right) _{ss}^{\alpha }\right)
^{2}+2(s+3)\sum_{1\leq i<j\leq s-1}\left( \left( {\cal A}^{{\cal V}}\right)
_{ij}^{\alpha }\right) ^{2}\right. \\
&&\left. +(s+5)\sum_{i=1}^{s-1}\left( \left( {\cal A}^{{\cal V}}\right)
_{is}^{\alpha }\right) ^{2}-\sum_{i,j=1}^{s}\left( {\cal A}^{{\cal V}%
}\right) _{ii}^{\alpha }\left( {\cal A}^{{\cal V}}\right) _{jj}^{\alpha
}\right\} .
\end{eqnarray*}%
Thus, 
\begin{eqnarray}
{\cal P}^{{\cal HV}} &\geq &\sum_{\alpha =1}^{s}\left( \ell \sum_{i=1}^{\ell
-1}\left( \left( {\cal T}^{{\cal H}}\right) _{ii}^{\alpha }\right) ^{2}+%
\frac{(\ell -1)}{2}\left( \left( {\cal T}^{{\cal H}}\right) _{\ell \ell
}^{\alpha }\right) ^{2}-2\sum_{1=i<j}^{\ell }\left( {\cal T}^{{\cal H}%
}\right) _{ii}^{\alpha }\left( {\cal T}^{{\cal H}}\right) _{jj}^{\alpha
}\right)  \nonumber \\
&&+\sum_{\alpha =1}^{\ell }\left\{ (s+3)\sum_{i,j=1}^{s-1}\left( \left( 
{\cal A}^{{\cal V}}\right) _{ii}^{\alpha }\right) ^{2}+\frac{(s+5)}{2}\left(
\left( {\cal A}^{{\cal V}}\right) _{ss}^{\alpha }\right)
^{2}-\sum_{i,j=1}^{s}\left( {\cal A}^{{\cal V}}\right) _{ii}^{\alpha }\left( 
{\cal A}^{{\cal V}}\right) _{jj}^{\alpha }\right\} .  \label{eq-phv-ineq}
\end{eqnarray}%
Since {\cal A} is skew symmetric, we have $\left( {\cal A}^{{\cal V}}\right)
_{ii}^{\alpha }=0$ for all $i=1,\ldots ,s$, $\alpha =1,\ldots ,\ell $.
Therefore, from (\ref{eq-phv-ineq}), we obtain 
\[
{\cal P}^{{\cal HV}}\geq \sum_{\alpha =1}^{s}\left( \ell \sum_{i=1}^{\ell
-1}\left( \left( {\cal T}^{{\cal H}}\right) _{ii}^{\alpha }\right) ^{2}+%
\frac{(\ell -1)}{2}\left( \left( {\cal T}^{{\cal H}}\right) _{\ell \ell
}^{\alpha }\right) ^{2}-2\sum_{1=i<j}^{\ell }\left( {\cal T}^{{\cal H}%
}\right) _{ii}^{\alpha }\left( {\cal T}^{{\cal H}}\right) _{jj}^{\alpha
}\right) . 
\]%
Let $f:{\Bbb R}^{\ell }\rightarrow {\Bbb R}{\bf \,}$\ be a quadratic form
for each $1\leq \alpha \leq s$ such that 
\[
f(\left( {\cal T}^{{\cal H}}\right) _{11}^{\alpha },\dots ,\left( {\cal T}^{%
{\cal H}}\right) _{\ell \ell }^{\alpha })=\sum_{i=1}^{\ell -1}\ell \left(
\left( {\cal T}^{{\cal H}}\right) _{ii}^{\alpha }\right) ^{2}+\frac{(\ell -1)%
}{2}\left( \left( {\cal T}^{{\cal H}}\right) _{\ell \ell }^{\alpha }\right)
^{2}-2\sum_{1=i<j}^{\ell }\left( {\cal T}^{{\cal H}}\right) _{ii}^{\alpha
}\left( {\cal T}^{{\cal H}}\right) _{jj}^{\alpha }, 
\]%
and constrained extremum problem $\min f$ subject to $\left( {\cal T}^{{\cal %
H}}\right) _{11}^{\alpha }+\cdots +\left( {\cal T}^{{\cal H}}\right) _{\ell
\ell }^{\alpha }=k^{\alpha }\in {\Bbb R}$. Then using Lemma \ref%
{Lemma_Tripathi}, we get the global minimum critical point, 
\[
\left( {\cal T}^{{\cal H}}\right) _{11}^{\alpha }=\cdots =\left( {\cal T}^{%
{\cal H}}\right) _{\ell -1\,\ell -1}^{\alpha }=\frac{k^{\alpha }}{\ell +1}%
,\quad \left( {\cal T}^{{\cal H}}\right) _{\ell \ell }^{\alpha }=\frac{%
2k^{\alpha }}{\ell +1}. 
\]%
Moreover, $f(\left( {\cal T}^{{\cal H}}\right) _{11}^{\alpha },\dots ,\left( 
{\cal T}^{{\cal H}}\right) _{\ell \ell }^{\alpha })=0$. Consequently, we
obtain 
\begin{equation}
{\cal P}^{{\cal HV}}\geq 0.  \label{eq-PHVGT0}
\end{equation}%
From (\ref{eq-qudpol-4}) and (\ref{eq-PHVGT0}), we obtain%
\begin{eqnarray}
2\tau _{{\cal H}}^{{\cal H}}+2\tau _{{\cal V}}^{{\cal V}} &\leq &\frac{1}{2}%
\ell (\ell -1){C}^{{\cal V}}+\frac{1}{2}s(s-1){C}^{{\cal H}}+\frac{1}{2}%
(\ell ^{2}-1){C}^{L^{{\cal V}}}  \nonumber \\
&&+\frac{1}{2}(s^{2}-1){C}^{L^{{\cal H}}}+2\tau ^{M_{1}}\left( p\right) +2%
\breve{\delta}\left( N\right) -\left\Vert {\cal T}^{{\cal V}}\right\Vert
^{2}+\left\Vert {\cal A}^{{\cal H}}\right\Vert ^{2}.  \label{eq-tauh-tauv}
\end{eqnarray}%
Using (\ref{eq-Nscal}) and (\ref{eq-Nhscal}) in (\ref{eq-tauh-tauv}), we
obtain%
\begin{eqnarray}
\frac{\rho _{{\cal H}}^{{\cal H}}}{\ell (\ell -1)}+\frac{\rho _{{\cal V}}^{%
{\cal V}}}{s(s-1)} &\leq &\frac{1}{s(s-1)}\left\{ \frac{1}{2}{C}^{{\cal V}}+%
\frac{\left( \ell +1\right) }{2\ell }{C}^{L^{{\cal V}}}\right\} +\frac{1}{%
\ell (\ell -1)}\left\{ \frac{1}{2}C^{{\cal H}}+\frac{\left( s+1\right) }{2s}{%
C}^{L^{{\cal H}}}\right\}  \nonumber \\
&&+\frac{2\tau ^{M_{1}}\left( p\right) }{s(s-1)\ell (\ell -1)}+\frac{\left( 2%
\breve{\delta}\left( N\right) -\left\Vert {\cal T}^{{\cal V}}\right\Vert
^{2}+\left\Vert {\cal A}^{{\cal H}}\right\Vert ^{2}\right) }{s(s-1)\ell
(\ell -1)}.  \label{eq-rhocurv-1}
\end{eqnarray}%
Similarly, we consider the other quadratic polynomial with respect to the
components of ${\cal T}$ and ${\cal A}$,%
\begin{eqnarray*}
{\cal Q}^{{\cal HV}} &=&2\ell \left( \ell -1\right) {C}^{{\cal V}}+2s\left(
s-1\right) {C}^{{\cal H}}-\frac{1}{2}\left( \ell -1\right) \left( 2\ell
-1\right) {C}^{L^{{\cal V}}}-\frac{1}{2}\left( s-1\right) \left( 2s-1\right) 
{C}^{L^{{\cal H}}} \\
&&+2\tau ^{M_{1}}\left( p\right) +2\breve{\delta}\left( N\right) -\left\Vert 
{\cal T}^{{\cal V}}\right\Vert ^{2}+\left\Vert {\cal A}^{{\cal H}%
}\right\Vert ^{2}-2\tau _{{\cal H}}^{{\cal H}}-2\tau _{{\cal V}}^{{\cal V}}.
\end{eqnarray*}%
Following the similar argument, we arrive at 
\[
{\cal Q}^{{\cal HV}}\geq 0, 
\]%
which implies 
\begin{eqnarray}
\frac{\rho _{{\cal H}}^{{\cal H}}}{\ell (\ell -1)}+\frac{\rho _{{\cal V}}^{%
{\cal V}}}{s(s-1)} &\leq &\frac{1}{s(s-1)}\left\{ 2C_{p}^{{\cal V}}-\frac{%
\left( 2\ell -1\right) }{2\ell }{C}^{L^{{\cal V}}}\right\} +\frac{1}{\ell
(\ell -1)}\left\{ 2C_{p}^{{\cal H}}-\frac{\left( 2s-1\right) }{2s}{C}^{L^{%
{\cal H}}}\right\}  \nonumber \\
&&+\frac{2\tau ^{M_{1}}\left( p\right) }{s(s-1)\ell (\ell -1)}+\frac{\left( 2%
\breve{\delta}\left( N\right) -\left\Vert {\cal T}^{{\cal V}}\right\Vert
^{2}+\left\Vert {\cal A}^{{\cal H}}\right\Vert ^{2}\right) }{s(s-1)\ell
(\ell -1)}.  \label{eq-rhocurv-2}
\end{eqnarray}%
Then (\ref{Gen Ine Cas 1}) follows by taking the infimum on all hyperplane $%
L^{{\cal H}}$ and $L^{{\cal V}}$ in (\ref{eq-rhocurv-1}), while we get (\ref%
{Gen Ine Cas 2}), taking supremum on all hyperplane $L^{{\cal H}}$ and $L^{%
{\cal V}}$ in (\ref{eq-rhocurv-2}).  \end{proof}

We construct two examples to examine the equality cases of Theorem \ref{Th
GEN CAS}: the inequalities obtained in Theorem \ref{Th GEN CAS} do not
attain equality in the first example, whereas they attain equality in the
second.

\begin{example}
Let $\left( M_{1}=\left\{ \left( x^{1},x^{2},x^{3},x^{4},x^{5},x^{6}\right)
\in {\Bbb R}^{6}\ :\ x^{6}>0\right\} ,\quad g_{1}=\left( x^{6}\right)
^{2}\sum_{i=1}^{5}\left( dx^{i}\right) ^{2}+\left( dx^{6}\right) ^{2}\right) 
$ and $\left( M_{2}=\left\{ \left( y^{1},y^{2},y^{3}\right) \in {\Bbb R}%
^{3}:y^{3}>0\right\} ,\quad g_{2}=\left( y^{3}\right) ^{2}\left(
dy^{1}\right) ^{2}+\left( y^{3}\right) ^{2}\left( dy^{2}\right) ^{2}+\left(
dy^{3}\right) ^{2}\right) $ be Riemannian manifold. We define a map $\pi
:\left( M_{1},g_{1}\right) \rightarrow \left( M_{2},g_{2}\right) $ by%
\[
\pi \left( x^{1},x^{2},x^{3},x^{4},x^{5},x^{6}\right) =\left(
x^{4},x^{5},x^{6}\right) . 
\]%
Then, we have%
\begin{eqnarray*}
{\cal V} &=&{\rm span}\left\{ V_{1}=e_{1},V_{2}=e_{2},V_{3}=e_{3}\right\} ,
\\
{\cal H} &=&{\rm span}\left\{ U_{1}=e_{4},U_{2}=e_{5},U_{3}=e_{6}\right\} ,
\\
{\rm range}~\pi _{\ast } &=&{\rm span}\left\{ \pi _{\ast }U_{1}=e_{1}^{\ast
},\pi _{\ast }U_{2}=e_{2}^{\ast },\pi _{\ast }U_{3}=e_{3}^{\ast }\right\} ,
\end{eqnarray*}%
where $\left\{ e_{1}=\frac{1}{x^{6}}\frac{\partial }{\partial x^{1}},e_{2}=%
\frac{1}{x^{6}}\frac{\partial }{\partial x^{2}},e_{3}=\frac{1}{x^{6}}\frac{%
\partial }{\partial x^{3}},e_{4}=\frac{1}{x^{6}}\frac{\partial }{\partial
x^{4}},e_{5}=\frac{1}{x^{6}}\frac{\partial }{\partial x^{5}},e_{6}=\frac{%
\partial }{\partial x^{6}}\right\} $ and \newline
$\left\{ e_{1}^{\ast }=\frac{1}{y^{3}}\frac{\partial }{\partial y^{1}}%
,e_{2}^{\ast }=\frac{1}{y^{3}}\frac{\partial }{\partial y^{2}},e_{3}^{\ast }=%
\frac{\partial }{\partial y^{3}}\right\} $ be orthonormal bases of $%
T_{p}M_{1}$ and $T_{\pi \left( p\right) }M_{2}$, respectively. We observe
that $g_{1}\left( U_{i},U_{j}\right) =g_{2}\left( \pi _{\ast }U_{i},\pi
_{\ast }U_{j}\right) $ for all $U_{i},U_{j}\in {\cal H}$ and ${\rm rank}\pi
_{\ast }=3$. Thus $\pi $ is a Riemannian submersion. The nonzero Christoffel
symbols for $g_{1}$ are%
\[
\Gamma _{i6}^{i}=\frac{1}{x^{6}}\quad and\quad \Gamma
_{ii}^{6}=-x^{6}\quad for\quad 1\leq i\leq 5. 
\]%
The covariant derivative of vertical vector fields are
\[
\nabla _{V_{i}}V_{i}=-\frac{1}{x^{6}}\frac{\partial }{\partial x^{6}}, \quad i=1,2,3, \quad {\rm and} \quad \nabla _{V_{i}}V_{j}=0,\quad i\neq j.
\]
Since $U_{3}=e_{6}=\frac{\partial }{\partial x^{6}}$, it follows that the only non-zero components of ${\cal T}^{{\cal H}}$ are
\begin{equation}\label{comp-exam-1-TH}
\left( {\cal T}^{{\cal H}}\right) _{ii}^{3}=-\frac{1}{x^{6}},\quad 1\leq
i\leq 3.
\end{equation}
Furthermore, using $\left[ U_{i},U_{j}\right] =\left( \nabla _{U_{i}}U_{j}-\nabla _{U_{j}}U_{i}\right)$, we obtain the only non-zero vectors $\left[ U_{i},U_{j}\right]$ are
\[
\left[ U_{1},U_{3}\right] = \left( \frac{1%
}{x^{6}}\right) ^{2}\frac{\partial }{\partial x^{4}}, \quad \left[ U_{2},U_{3}\right] = \left( \frac{1%
}{x^{6}}\right) ^{2}\frac{\partial }{\partial x^{5}},
\]
these vectors belong to {\cal H}, hence
\[
v\left[ U_{i},U_{j}\right]=0, \quad 1\leq i,j\leq 3.
\]
Consequently,
\begin{equation}\label{comp-exam-1-AV}
\left( {\cal A}^{{\cal V}}\right) _{ij}^{\alpha }=0, \quad 1\leq i,j\leq 3,\ 1\leq \alpha \leq 3.
\end{equation}
In view of (\ref{comp-exam-1-TH}) and (\ref{comp-exam-1-AV}), we conclude that the inequality obtained
in Theorem \ref{Th GEN CAS} does not attain equality.
\end{example}

\begin{example}
Let $M_{1}=\left\{ \left( x^{1},x^{2},x^{3},x^{4},x^{5},x^{6}\right) \in 
{\Bbb R}^{6}\ :\ x^{2},x^{4},x^{6}>0\right\} $ and \newline
$M_{2}=\left\{ \left( y^{1},y^{2},y^{3}\right) \in {\Bbb R}^{3}\right\} $ be
two Riemannian manifolds with Riemannian metric $g_{1}=\left( dx^{1}\right)
^{2}+\left( x^{2}\right) ^{2}\left( dx^{2}\right) ^{2}+\left( dx^{3}\right)
^{2}+\left( x^{4}\right) ^{2}\left( dx^{4}\right) ^{2}+\left( dx^{5}\right)
^{2}+\left( x^{6}\right) ^{2}\left( dx^{6}\right) ^{2}$ and $g_{2}=\left(
dy^{1}\right) ^{2}+\left( dy^{2}\right) ^{2}+\left( dy^{3}\right) ^{2}$,
respectively. Define a map $\pi :\left( M_{1},g_{1}\right) \rightarrow
\left( M_{2},g_{2}\right) $ by 
\[
\pi \left( x^{1},x^{2},x^{3},x^{4},x^{5},x^{6}\right) =\left(
x^{1},x^{3},x^{5}\right) . 
\]%
Then, we obtain 
\begin{eqnarray*}
{\cal V} &=&{\rm span}\left\{ V_{1}=e_{2},V_{2}=e_{4},V_{3}=e_{6}\right\} ,
\\
{\cal H} &=&{\rm span}\left\{ U_{1}=e_{1},U_{2}=e_{3},U_{3}=e_{5}\right\} ,
\\
{\rm range}~\pi _{\ast } &=&{\rm span}\left\{ \pi _{\ast }U_{1}=e_{1}^{\ast
},\pi _{\ast }U_{2}=e_{2}^{\ast },\pi _{\ast }U_{3}=e_{3}^{\ast }\right\} ,
\end{eqnarray*}%
where $\left\{ e_{1}=\frac{\partial }{\partial x^{1}},e_{2}=\frac{1}{x^{2}}%
\frac{\partial }{\partial x^{2}},e_{3}=\frac{\partial }{\partial x^{3}}%
,e_{4}=\frac{1}{x^{4}}\frac{\partial }{\partial x^{4}},e_{5}=\frac{\partial 
}{\partial x^{5}},e_{6}=\frac{1}{x^{6}}\frac{\partial }{\partial x^{6}}%
\right\} $ and \newline
$\left\{ e_{1}^{\ast }=\frac{\partial }{\partial y^{1}},e_{2}^{\ast }=\frac{%
\partial }{\partial y^{2}},e_{3}^{\ast }=\frac{\partial }{\partial y^{3}}%
\right\} $ be orthonormal bases of $T_{p}M_{1}$ and $T_{\pi \left( p\right)
}M_{2}$, respectively. We observe that $g_{1}\left( U_{i},U_{j}\right)
=g_{2}\left( \pi _{\ast }U_{i},\pi _{\ast }U_{j}\right) $ for all $%
U_{i},U_{j}\in {\cal H}$ and ${\rm rank}\pi _{\ast }=3$. Thus $\pi $ is a
Riemannian submersion. The nonzero Christoffel symbols for $g_{1}$ are%
\[
\Gamma _{22}^{2}=\frac{1}{x^{2}},\ \Gamma _{44}^{4}=\frac{1}{x^{4}},\ \Gamma
_{66}^{6}=\frac{1}{x^{6}}. 
\]%
The covariant derivative of vertical vector fields are
\[
\nabla_{V_{i}}V_{j}=0, \quad i,j \in \{1,2,3\}.
\]
The components of ${\cal T}^{\cal H}$ are
\[
\left( {\cal T}^{{\cal H}}\right) _{ij}^{\alpha } =0,\quad 1\leq i,j,\alpha \leq 3. 
\]%
Now, we compute the components of the tensor ${\cal A}^{{\cal V}}$ same as
previous example, we get%
\[
\left( {\cal A}^{{\cal V}}\right) _{ij}^{\alpha }=0,\quad 1\leq i,j,\alpha
\leq 3. 
\]%
Hence, we observe that the inequalities obtained in Theorem \ref{Th GEN CAS}
attain equality.
\end{example}

\section{Applications}\label{sec_4}
In this section, we addresses applications of the derived inequalities in Theorem~\ref{Th GEN CAS} to Riemannian submersions whose total manifolds are real, complex, and generalized Sasakian space forms, with the equality cases illustrated by several examples. Direct computations yield the corresponding inequalities for Riemannian submersions whose total spaces are Sasakian, Kenmotsu, cosymplectic, and $C(\alpha)$-space forms.
\begin{theorem}
\label{Theorem RSF} Let $\pi :(M_{1},g_{1})\rightarrow \left(
M_{2},g_{2}\right) $ be a Riemannian submersion from a real space form of
constant sectional curvature $c$ onto a Riemannian manifold with $\dim {\cal %
H}=s\geq 3$, and $\dim {\cal V}=\ell \geq 3$, 
\begin{eqnarray}
\frac{\rho _{{\cal H}}^{{\cal H}}}{\ell (\ell -1)}+\frac{\rho _{{\cal V}}^{%
{\cal V}}}{s(s-1)} &\leq &\frac{1}{s(s-1)}\delta _{C}^{{\cal V}}\left( \ell
-1\right) +\frac{1}{\ell (\ell -1)}\delta _{C}^{{\cal H}}\left( s-1\right) 
\nonumber \\
&&+\frac{c\left( \ell ^{2}+s^{2}+2s\ell -\ell -s\right) }{s(s-1)\ell (\ell
-1)}  \nonumber \\
&&+\frac{\left( 2\breve{\delta}\left( N\right) -\left\Vert {\cal T}^{{\cal V}%
}\right\Vert ^{2}+\left\Vert {\cal A}^{{\cal H}}\right\Vert ^{2}\right) }{%
s(s-1)\ell (\ell -1)},  \label{eq-Cas RSF 1}
\end{eqnarray}%
and%
\begin{eqnarray}
\frac{\rho _{{\cal H}}^{{\cal H}}}{\ell (\ell -1)}+\frac{\rho _{{\cal V}}^{%
{\cal V}}}{s(s-1)} &\leq &\frac{1}{s(s-1)}\hat{\delta}_{C}^{{\cal V}}\left(
\ell -1\right) +\frac{1}{\ell (\ell -1)}\hat{\delta}_{C}^{{\cal H}}\left(
s-1\right)  \nonumber \\
&&+\frac{c\left( \ell ^{2}+s^{2}+2s\ell -\ell -s\right) }{s(s-1)\ell (\ell
-1)}  \nonumber \\
&&+\frac{1}{s(s-1)\ell (\ell -1)}\left( 2\breve{\delta}\left( N\right)
-\left\Vert {\cal T}^{{\cal V}}\right\Vert ^{2}+\left\Vert {\cal A}^{{\cal H}%
}\right\Vert ^{2}\right) .  \label{eq-Cas RSF 2}
\end{eqnarray}%
The equality holds in any of {\rm (\ref{eq-Cas RSF 1})} and {\rm (\ref%
{eq-Cas RSF 2})} at $p\in M_{1}$ if and only if the tensors ${\cal T}$ and $%
{\cal A}$ satisfy {\rm (\ref{eq-Cas-equality-1})}, {\rm (\ref%
{eq-Cas-equality-2})} and {\rm (\ref{eq-Cas-equality-3})}.
\end{theorem}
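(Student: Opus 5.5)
This is a direct specialization of Theorem \ref{Th GEN CAS}. The plan is to compute $2\tau^{M_{1}}(p)$ explicitly for a real space form and substitute it into (\ref{Gen Ine Cas 1}) and (\ref{Gen Ine Cas 2}). Both Casorati terms and the terms $2\breve{\delta}(N)-\|{\cal T}^{{\cal V}}\|^{2}+\|{\cal A}^{{\cal H}}\|^{2}$ carry over unchanged, so only the curvature term $2\tau^{M_{1}}(p)$ needs to be evaluated.

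\textbf{Step 1: sectional curvatures.} By (\ref{eq-RSF}) and the Kulkarni--Nomizu product (\ref{eq-KN-product}), for orthonormal $X,Y$ we have
\[
R^{M_{1}}(X,Y,Y,X)=\tfrac{c}{2}\bigl(g\circledast g\bigr)(X,Y,Y,X)=\tfrac{c}{2}\bigl(2g(Y,Y)g(X,X)-2g(X,Y)^{2}\bigr)=c ,
\]
with the sign convention matching $R(X,Y,Y,X)$ as sectional curvature. This gives:
\begin{itemize}
\item each vertical pair $(V_i,V_j)$, $i\neq j$, contributes $c$, so $2\tau^{M_{1}}_{{\cal V}}=\ell(\ell-1)c$;
\item each horizontal pair $(U_i,U_j)$, $i\neq j$, contributes $c$, so $2\tau^{M_{1}}_{{\cal H}}=s(s-1)c$;
\item each mixed pair gives $R^{M_{1}}(U_i,V_j,V_j,U_i)=c$, and there are $s\ell$ such pairs, for a mixed sum of $s\ell c$.
\end{itemize}

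\textbf{Step 2: assemble $2\tau^{M_{1}}$.} Using the definition of $\tau^{M_{1}}$ in Theorem \ref{Th GEN CAS}, in which the mixed term appears once,
\[
2\tau^{M_{1}}(p)=\ell(\ell-1)c+s(s-1)c+2s\ell c=c\,(\ell^{2}+s^{2}+2s\ell-\ell-s).
\]
This is exactly the numerator in (\ref{eq-Cas RSF 1}). Dividing by $s(s-1)\ell(\ell-1)$ and substituting into (\ref{Gen Ine Cas 1}) and (\ref{Gen Ine Cas 2}) yields (\ref{eq-Cas RSF 1}) and (\ref{eq-Cas RSF 2}).

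\textbf{Step 3: equality cases.} The curvature term is an identity and does not depend on ${\cal T}$ or ${\cal A}$. Equality in (\ref{eq-Cas RSF 1}) or (\ref{eq-Cas RSF 2}) is therefore equivalent to equality in the corresponding inequality of Theorem \ref{Th GEN CAS}. That is precisely (\ref{eq-Cas-equality-1})--(\ref{eq-Cas-equality-3}).

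The only delicate point is the bookkeeping of factors of $2$ between $\tau$ and $2\tau$ and the mixed sum. Since $\tau^{M_{1}}=\tau_{{\cal V}}^{M_{1}}+\tau_{{\cal H}}^{M_{1}}+\sum R(U_i,V_j,V_j,U_i)$, doubling produces $2s\ell c$ for the mixed part. This matches the stated coefficient $2s\ell$, so the result is consistent.
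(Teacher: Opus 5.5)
Your reduction is the paper's proof of this statement. By (\ref{eq-RSF}) every sectional curvature equals $c$, so $2\tau^{M_{1}}(p)=c(s+\ell)(s+\ell-1)=c(\ell^{2}+s^{2}+2s\ell-\ell-s)$. This is substituted into (\ref{Gen Ine Cas 1}) and (\ref{Gen Ine Cas 2}), and the equality conditions carry over unchanged; your handling of the factor $2$ on the mixed term is correct.

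The reduction only transfers what Theorem \ref{Th GEN CAS} gives, and that theorem is not sound. The statement as written is false, so neither your argument nor the paper's proves it.

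The identity (\ref{eq-scalN1-3}) is where it goes wrong:
\begin{itemize}
\item Formula (\ref{Gauss_Codazz_RS_A}) gives $R^{M_{1}}(X,Y,Y,X)=R^{\cal H}(X,Y,Y,X)-3\|{\cal A}_{X}Y\|^{2}$, so the horizontal term should be $-3sC^{\cal H}$, not $+3sC^{\cal H}$.
\item O'Neill's formula $R^{M_{1}}(X,V,V,X)=g_{1}((\nabla_{X}{\cal T})(V,V),X)+\|{\cal A}_{X}V\|^{2}-\|{\cal T}_{V}X\|^{2}$ makes the mixed contribution $2\breve{\delta}(N)+2\|{\cal A}^{\cal H}\|^{2}-2\|{\cal T}^{\cal V}\|^{2}$.
\end{itemize}

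For a concrete counterexample, write $H^{6}(-1)=(\mathbb{R}^{6},dt^{2}+e^{2t}\sum_{i=1}^{5}(dx^{i})^{2})$ and $H^{3}(-1)=(\mathbb{R}^{3},dt^{2}+e^{2t}\sum_{i=1}^{2}(dx^{i})^{2})$, with $\pi(t,x^{1},\dots,x^{5})=(t,x^{1},x^{2})$. This is a Riemannian submersion from a real space form with $c=-1$ and $s=\ell=3$. Here ${\cal A}=0$ and ${\cal T}_{V}W=-g_{1}(V,W)\partial_{t}$ for vertical $V,W$; the fibres are flat and the base has curvature $-1$. Hence:
\begin{itemize}
\item $\rho^{\cal H}_{\cal H}=-1$ and $\rho^{\cal V}_{\cal V}=0$;
\item $\delta^{\cal V}_{C}(2)=\hat{\delta}^{\cal V}_{C}(2)=7/6$ and $\delta^{\cal H}_{C}(2)=\hat{\delta}^{\cal H}_{C}(2)=0$;
\item $\|{\cal T}^{\cal V}\|^{2}=3$ and $\breve{\delta}(N)=-6$.
\end{itemize}
Multiplying (\ref{eq-Cas RSF 1}) or (\ref{eq-Cas RSF 2}) by $36$ gives $-6\leq 7-30-12-3=-38$, which is false. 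It stays false ($-6\leq -23$) even if the $\breve{\delta}(N)$ and $\|{\cal T}^{\cal V}\|^{2}$ terms are dropped. In the same example the right-hand side of (\ref{eq-scalN1-3}) equals $3$, whereas $2\tau^{M_{1}}=-30$.

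Your specialization step is fine; the gap is in Theorem \ref{Th GEN CAS}. A corrected general identity and inequality would be needed before any version of this statement can be proved.
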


\begin{proof} Using (\ref{eq-RSF}), (\ref{eq-scalV}) and (\ref%
{eq-Hscal}), we obtain%
\begin{equation}
2\tau _{{\cal V}}^{M_{1}}\left( p\right) =c\ell \left( \ell -1\right) ,\quad
2\tau _{{\cal H}}^{M_{1}}\left( p\right) =cs\left( s-1\right) ,\
\sum_{i=1}^{s}\sum_{j=1}^{\ell }R^{M_{1}}\left(
U_{i},V_{j},V_{j},U_{i}\right) =cs\ell .  \label{eq-scalRSF}
\end{equation}%
From (\ref{eq-scalRSF}), we obtain%
\begin{equation}
2\tau ^{M_{1}}\left( p\right) =c\left( \ell ^{2}+s^{2}+2s\ell -\ell
-s\right) .  \label{eq-NSCALRSF}
\end{equation}%
Substituting (\ref{eq-NSCALRSF}) into (\ref{Gen Ine Cas 1}) and (\ref{Gen Ine Cas 2}), we get (\ref{eq-Cas RSF 1}) and (\ref{eq-Cas RSF 2}), respectively.
\end{proof}

We construct the following example to characterize the equality cases of the
above theorem.

\begin{example}
Let $\left( M_{1}=\left\{ \left( x^{1},x^{2},x^{3},x^{4},x^{5},x^{6}\right)
\in {\Bbb R}^{6}\ \right\},\ g_{1}=\sum_{i=1}^{6}\left( dx^{i}\right)
^{2}\right) $ be a real space form and $\left( M_{2}=\left\{ \left(
y^{1},y^{2},y^{3}\right) \in {\Bbb R}^{3}\ \right\},\ g_{2}=\sum_{i=1}^{3}\left( dy^{i}\right) ^{2}\right) $ be Riemannian
manifold. Define a map $\pi :\left( M_{1},g_{1}\right) \rightarrow \left(
M_{2},g_{2}\right) $ by%
\[
\pi \left( x^{1},x^{2},x^{3},x^{4},x^{5},x^{6}\right) =\left( \frac{%
x^{1}-x^{3}}{\sqrt{2}},x^{4},\frac{x^{5}+x^{6}}{\sqrt{2}}\right) . 
\]%
Then, we obtain%
\begin{eqnarray*}
{\cal V} &=&{\rm span}\left\{ V_{1}=\frac{1}{\sqrt{2}}\frac{\partial }{%
\partial x^{1}}+\frac{1}{\sqrt{2}}\frac{\partial }{\partial x^{3}},V_{2}=%
\frac{\partial }{\partial x^{2}},V_{3}=\frac{1}{\sqrt{2}}\frac{\partial }{%
\partial x^{5}}-\frac{1}{\sqrt{2}}\frac{\partial }{\partial x^{6}}\right\} ,
\\
{\cal H} &=&{\rm span}\left\{ U_{1}=\frac{1}{\sqrt{2}}\frac{\partial }{%
\partial x^{1}}-\frac{1}{\sqrt{2}}\frac{\partial }{\partial x^{3}},U_{2}=%
\frac{\partial }{\partial x^{4}},U_{3}=\frac{1}{\sqrt{2}}\frac{\partial }{%
\partial x^{5}}+\frac{1}{\sqrt{2}}\frac{\partial }{\partial x^{6}}\right\} ,
\\
{\rm range}~\pi _{\ast } &=&{\rm span}\left\{ \pi _{\ast }U_{1}=\frac{%
\partial }{\partial y^{1}},\pi _{\ast }U_{2}=\frac{\partial }{\partial y^{2}}%
,\pi _{\ast }U_{3}=\frac{\partial }{\partial y^{3}}\right\} ,
\end{eqnarray*}%
where $\left\{ \frac{\partial }{\partial x^{1}},\frac{\partial }{\partial
x^{2}},\frac{\partial }{\partial x^{3}},\frac{\partial }{\partial x^{4}},%
\frac{\partial }{\partial x^{5}},\frac{\partial }{\partial x^{6}}\right\} $
and $\left\{ \frac{\partial }{\partial y^{1}},\frac{\partial }{\partial y^{2}%
},\frac{\partial }{\partial y^{3}}\right\} $ are orthonormal bases of $%
T_{p}M_{1}$ and $T_{\pi \left( p\right) }M_{2}$, respectively. It follows that
\[
g_{1}(U_{i},U_{j})
=
g_{2}(\pi_{\ast}U_{i},\pi_{\ast}U_{j}),
\qquad i,j=1,2,3,
\]
and ${\rm rank}\pi_{\ast}=3=\dim M_{2}$.
Therefore, $\pi$ is a Riemannian submersion. Since the metric $g_{1}$
is the standard Euclidean metric on ${\Bbb R}^{6}$, all Christoffel symbols
vanish. Therefore, the O'Neill tensors vanish identically, that is,
\[
\left( {\cal T}^{{\cal H}}\right) _{ij}^{\alpha }=0,\quad \left( {\cal A}^{%
{\cal V}}\right) _{ij}^{\alpha }=0,\quad i,j\in \{1,2,3\},\ \alpha \in
\{1,2,3\}. 
\]%
Therefore, the inequalities established in
Theorem~\ref{Theorem RSF} are satisfied with equality.
\end{example}
\begin{theorem}
\label{Theorem CSF} Let $\pi :(M_{1},g_{1},J)\rightarrow \left(
M_{2},g_{2}\right) $ be a Riemannian submersion from a complex space form of
constant holomorphic sectional curvature $c$ onto a Riemannian manifold such
that $\dim M_{1}=n=2k$ with $\dim {\cal H}_{p}=s\geq 3$ and $\dim {\cal V}%
_{p}=\ell \geq 3$,%
\begin{eqnarray}
\frac{\rho _{{\cal H}}^{{\cal H}}}{\ell (\ell -1)}+\frac{\rho _{{\cal V}}^{%
{\cal V}}}{s(s-1)} &\leq &\frac{1}{s(s-1)}\delta _{C}^{{\cal V}}\left( \ell
-1\right) +\frac{1}{\ell (\ell -1)}\delta _{C}^{{\cal H}}\left( s-1\right) 
\nonumber \\
&&+\frac{1}{s(s-1)\ell (\ell -1)}\frac{c}{4}\left( \ell ^{2}+s^{2}+2sl-\ell
-s\right)  \nonumber \\
&&+\frac{3c}{4\ell \left( \ell -1\right) s(s-1)}\left( \left\Vert
Q\right\Vert ^{2}+\left\Vert P\right\Vert ^{2}+2\left\Vert P^{{\cal V}%
}\right\Vert ^{2}\right)  \nonumber \\
&&+\frac{1}{s(s-1)\ell (\ell -1)}\left( 2\breve{\delta}\left( N\right)
-\left\Vert {\cal T}^{{\cal V}}\right\Vert ^{2}+\left\Vert {\cal A}^{{\cal H}%
}\right\Vert ^{2}\right) ,  \label{eq-CAS CSF 1}
\end{eqnarray}%
and%
\begin{eqnarray}
\frac{\rho _{{\cal H}}^{{\cal H}}}{\ell (\ell -1)}+\frac{\rho _{{\cal V}}^{%
{\cal V}}}{s(s-1)} &\leq &\frac{1}{s(s-1)}\hat{\delta}_{C}^{{\cal V}}\left(
\ell -1\right) +\frac{1}{\ell (\ell -1)}\hat{\delta}_{C}^{{\cal H}}\left(
s-1\right)  \nonumber \\
&&+\frac{c}{4s(s-1)\ell (\ell -1)}\left( \ell ^{2}+s^{2}+2sl-\ell -s\right) 
\nonumber \\
&&+\frac{3c}{4\ell \left( \ell -1\right) s(s-1)}\left( \left\Vert
Q\right\Vert ^{2}+\left\Vert P\right\Vert ^{2}+2\left\Vert P^{{\cal V}%
}\right\Vert ^{2}\right)  \nonumber \\
&&+\frac{1}{s(s-1)\ell (\ell -1)}\left( 2\breve{\delta}\left( N\right)
-\left\Vert {\cal T}^{{\cal V}}\right\Vert ^{2}+\left\Vert {\cal A}^{{\cal H}%
}\right\Vert ^{2}\right) .  \label{eq-CAS CSF 2}
\end{eqnarray}%
The equality holds in any of {\rm (\ref{eq-CAS CSF 1})} and {\rm (\ref%
{eq-CAS CSF 2})} at $p\in M_{1}$ if and only if the tensors ${\cal T}$ and $%
{\cal A}$ satisfy {\rm (\ref{eq-Cas-equality-1})}, {\rm (\ref%
{eq-Cas-equality-2})} and {\rm (\ref{eq-Cas-equality-3})}.
\end{theorem}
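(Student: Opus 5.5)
The plan is to mirror the proof of Theorem~\ref{Theorem RSF}: compute $2\tau^{M_{1}}(p)$ for the complex space form and substitute it into (\ref{Gen Ine Cas 1}) and (\ref{Gen Ine Cas 2}). Theorem~\ref{Th GEN CAS} holds for an arbitrary total space, so both the inequalities and the equality characterisation (\ref{eq-Cas-equality-1})--(\ref{eq-Cas-equality-3}) carry over unchanged once $\tau^{M_{1}}$ is expressed in terms of $c$, $P$, $Q$ and $P^{{\cal V}}$.

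First, expand (\ref{eq-GCSF}) with the Kulkarni--Nomizu product (\ref{eq-KN-product}) and the symmetric product (\ref{eq-KN-symm-product}). This gives the familiar form
\[
R^{M_{1}}(X,Y,Z,W)=\tfrac{c}{4}\{g(Y,Z)g(X,W)-g(X,Z)g(Y,W)+g(X,JW)g(Y,JZ)-g(X,JZ)g(Y,JW)+2g(X,JY)g(Z,JW)\}
\]
up to the sign convention of the paper. The convention should be fixed by checking that it yields holomorphic sectional curvature $c$.

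Next, evaluate the three blocks of (\ref{eq-scalN1}) using the bases $\{V_{i}\}$ and $\{U_{\alpha}\}$:
\begin{itemize}
\item For the vertical block, $\sum_{i,j}R^{M_{1}}(V_{i},V_{j},V_{j},V_{i})=\frac{c}{4}\{\ell(\ell-1)+3\sum_{i,j}g(V_{i},JV_{j})^{2}\}=\frac{c}{4}\{\ell(\ell-1)+3\|Q\|^{2}\}$.
\item For the horizontal block, the same computation gives $\frac{c}{4}\{s(s-1)+3\|P\|^{2}\}$.
\item For the mixed terms, $\sum_{\alpha,j}R^{M_{1}}(U_{\alpha},V_{j},V_{j},U_{\alpha})=\frac{c}{4}\{s\ell+3\sum_{\alpha,j}g(U_{\alpha},JV_{j})^{2}\}=\frac{c}{4}\{s\ell+3\|P^{{\cal V}}\|^{2}\}$.
\end{itemize}

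Adding these according to (\ref{eq-scalN1}), with the mixed sum counted twice as in (\ref{eq-NSCALRSF}), gives
\[
2\tau^{M_{1}}(p)=\tfrac{c}{4}(\ell^{2}+s^{2}+2s\ell-\ell-s)+\tfrac{3c}{4}\left(\|Q\|^{2}+\|P\|^{2}+2\|P^{{\cal V}}\|^{2}\right).
\]
Substituting this into (\ref{Gen Ine Cas 1}) and (\ref{Gen Ine Cas 2}) gives (\ref{eq-CAS CSF 1}) and (\ref{eq-CAS CSF 2}).

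The main obstacle is bookkeeping rather than concept. Two points need care: the coefficient $3$ in front of the $J$-terms, which comes from the $\frac12 J^{\flat}\circledast J^{\flat}$ and $J^{\flat}\circledcirc J^{\flat}$ parts combined, and the factor $2$ on $\|P^{{\cal V}}\|^{2}$. This factor requires consistency with how $\tau^{M_{1}}$ counts the mixed curvatures, namely twice in $2\tau^{M_{1}}$. I would verify both by checking the flat-$J$ contributions separately and comparing with the real-space-form count $c s\ell$ in (\ref{eq-scalRSF}). The equality statement then follows verbatim from Theorem~\ref{Th GEN CAS}.
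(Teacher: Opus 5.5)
Your proposal is correct and follows the paper's proof exactly. You evaluate the vertical, horizontal and mixed blocks as $\frac{c}{4}\ell(\ell-1)+\frac{3c}{4}\|Q\|^{2}$, $\frac{c}{4}s(s-1)+\frac{3c}{4}\|P\|^{2}$ and $\frac{c}{4}s\ell+\frac{3c}{4}\|P^{{\cal V}}\|^{2}$, assemble $2\tau^{M_{1}}(p)$ with the mixed block doubled, and substitute into (\ref{Gen Ine Cas 1}) and (\ref{Gen Ine Cas 2}), with the equality case inherited from Theorem~\ref{Th GEN CAS}.

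One slip: the last term of your displayed curvature expansion should be $-2g(X,JY)g(Z,JW)$; as written it gives holomorphic sectional curvature $0$. Your proposed holomorphic-curvature check catches this, and the block values you actually use are the correct ones.
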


\begin{proof} Using (\ref{eq-GCSF}), (\ref{eq-scalV}) and (\ref%
{eq-Hscal}), we obtain%
\begin{equation}
2\tau _{{\cal V}}^{M_{1}}\left( p\right) =\frac{c}{4}\ell \left( \ell
-1\right) +\frac{3c}{4}\left\Vert Q\right\Vert ^{2},\quad 2\tau _{{\cal H}%
}^{M_{1}}\left( p\right) =\frac{c}{4}s\left( s-1\right) +\frac{3c}{4}%
\left\Vert P\right\Vert ^{2},  \label{eq-scalCSF 1}
\end{equation}%
\begin{equation}
\sum_{i=1}^{s}\sum_{j=1}^{\ell }R^{M_{1}}\left(
U_{i},V_{j},V_{j},U_{i}\right) =\frac{c}{4}s\ell +\frac{3c}{4}\left\Vert P^{%
{\cal V}}\right\Vert ^{2}.  \label{eq-NSCALCSF 1}
\end{equation}%
From (\ref{eq-scalCSF 1}) and (\ref{eq-NSCALCSF 1}), we obtain%
\begin{equation}
2\tau ^{M_{1}}\left( p\right) =\frac{c}{4}\left( \ell ^{2}+s^{2}+2\ell
s-\ell -s\right) +\frac{3c}{4}\left( \left\Vert Q\right\Vert ^{2}+\left\Vert
P\right\Vert ^{2}+2\left\Vert P^{{\cal V}}\right\Vert ^{2}\right) .
\label{eq-NSCALCSF}
\end{equation}%
Substituting (\ref{eq-NSCALCSF}) into (\ref{Gen Ine Cas 1}) and (\ref{Gen Ine Cas 2}), we get (\ref{eq-CAS CSF 1}) and (\ref{eq-CAS CSF 2}), respectively.
\end{proof}

We construct the following example to characterize the equality cases of the
above theorem.

\begin{example}
Let 
\[
M_{1}=\left\{ (x^{1},x^{2},x^{3},x^{4},x^{5},x^{6},x^{7},x^{8})\in {\Bbb R}%
^{8}:\ x^{i}>0,\ 1\leq i\leq 8\right\} , 
\]%
be equipped with the Euclidean metric 
\[
g_{1}=\sum_{i=1}^{8}\left( dx^{i}\right) ^{2}, 
\]%
and the compatible almost complex structure $J$ defined by%
\begin{eqnarray*}
J\left( \frac{\partial }{\partial x^{1}}\right) &=&\frac{\partial }{\partial
x^{2}},\ J\left( \frac{\partial }{\partial x^{2}}\right) =-\frac{\partial }{%
\partial x^{1}},\ J\left( \frac{\partial }{\partial x^{3}}\right) =\frac{%
\partial }{\partial x^{4}},\ J\left( \frac{\partial }{\partial x^{4}}\right)
=-\frac{\partial }{\partial x^{3}}, \\
J\left( \frac{\partial }{\partial x^{5}}\right) &=&\frac{\partial }{\partial
x^{6}},\ J\left( \frac{\partial }{\partial x^{6}}\right) =-\frac{\partial }{%
\partial x^{5}},\ J\left( \frac{\partial }{\partial x^{7}}\right) =\frac{%
\partial }{\partial x^{8}},\ J\left( \frac{\partial }{\partial x^{8}}\right)
=-\frac{\partial }{\partial x^{7}}.
\end{eqnarray*}%
Then $(M_{1},g_{1},J)$ becomes a K\"{a}hler manifold. Also, let 
\[
M_{2}=\left\{ (y^{1},y^{2},y^{3},y^{4})\in {\Bbb R}^{4}:\ y^{i}>0,\ 1\leq
i\leq 4\right\} , 
\]%
be endowed with the Euclidean metric 
\[
g_{2}=\sum_{j=1}^{4}\left( dy^{j}\right) ^{2}. 
\]%
Then $(M_{2},g_{2})$ becomes a Riemannian manifold. Define a map 
\[
\pi :(M_{1},g_{1},J)\longrightarrow (M_{2},g_{2}), 
\]%
by 
\[
\pi (x^{1},x^{2},x^{3},x^{4},x^{5},x^{6},x^{7},x^{8})=\left( \tau _{1},\tau
_{2},\tau _{3},\tau _{4}\right) . 
\]%
where $\tau _{1}=\sqrt{\left( x^{1}\right) ^{2}+\left( x_{2}\right) ^{2}}$, $%
\tau _{2}=\sqrt{\left( x^{3}\right) ^{2}+\left( x^{4}\right) ^{2}}$, $\tau
_{3}=\sqrt{\left( x^{5}\right) ^{2}+\left( x^{6}\right) ^{2}}$ and $\tau
_{4}=\sqrt{\left( x^{7}\right) ^{2}+\left( x^{8}\right) ^{2}}$. We obtain 
\begin{eqnarray*}
{\cal V} &=&{\rm span}\left\{ V_{1}=\frac{x^{2}}{\tau _{1}}\frac{\partial }{%
\partial x^{1}}-\frac{x^{1}}{\tau _{1}}\frac{\partial }{\partial x^{2}}%
,V_{2}=\frac{x^{4}}{\tau _{2}}\frac{\partial }{\partial x^{3}}-\frac{x^{3}}{%
\tau _{2}}\frac{\partial }{\partial x^{4}},\right. \\
&&\left. V_{3}=\frac{x^{6}}{\tau _{3}}\frac{\partial }{\partial x^{5}}-\frac{%
x^{5}}{\tau _{3}}\frac{\partial }{\partial x^{6}},\ V_{4}=\frac{x^{8}}{\tau
_{4}}\frac{\partial }{\partial x^{7}}-\frac{x^{7}}{\tau _{4}}\frac{\partial 
}{\partial x^{8}}\right\} ,
\end{eqnarray*}%
\begin{eqnarray*}
{\cal H} &=&{\rm span}\left\{ U_{1}=\frac{x^{1}}{\tau _{1}}\frac{\partial }{%
\partial x^{1}}+\frac{x^{2}}{\tau _{1}}\frac{\partial }{\partial x^{2}}%
,U_{2}=\frac{x^{3}}{\tau _{2}}\frac{\partial }{\partial x^{3}}+\frac{x^{4}}{%
\tau _{2}}\frac{\partial }{\partial x^{4}},\right. \\
&&\left. U_{3}=\frac{x^{5}}{\tau _{3}}\frac{\partial }{\partial x^{5}}+\frac{%
x^{6}}{\tau _{3}}\frac{\partial }{\partial x^{6}},U_{4}=\frac{x^{7}}{\tau
_{4}}\frac{\partial }{\partial x^{7}}+\frac{x^{8}}{\tau _{4}}\frac{\partial 
}{\partial x^{8}}\right\} ,
\end{eqnarray*}%
\[
{\rm range}~\pi _{\ast }={\rm span}\left\{ \pi _{\ast }(U_{j})=\frac{%
\partial }{\partial y^{j}}\right\} _{j=1}^{4}, 
\]%
where $\left\{ \frac{\partial }{\partial x^{i}}\right\} _{i=1}^{8}$ and $%
\left\{ \frac{\partial }{\partial y^{j}}\right\} _{j=1}^{4}$ are bases of $%
T_{p}M_{1}$ and ${\cal T}_{\pi (p)}M_{2}$ respectively. One can verify that $%
\pi $ is a Riemannian submersion with horizontal and vertical spaces of
dimensions $4$. Since all Christoffel symbols vanish, we obtain 
\[
{\cal T}_{V_{1}}^{{\cal H}}V_{1}=-\left( \frac{x^{1}}{\tau _{1}^{2}}\frac{%
\partial }{\partial x^{1}}+\frac{x^{2}}{\tau _{1}^{2}}\frac{\partial }{%
\partial x^{2}}\right) ,\quad {\cal T}_{V_{2}}^{{\cal H}}V_{2}=-\left( \frac{%
x^{3}}{\tau _{2}^{2}}\frac{\partial }{\partial x^{3}}+\frac{x^{4}}{\tau
_{2}^{2}}\frac{\partial }{\partial x^{4}}\right) , 
\]%
\[
{\cal T}_{V_{3}}^{{\cal H}}V_{3}=-\left( \frac{x^{5}}{\tau _{3}^{2}}\frac{%
\partial }{\partial x^{5}}+\frac{x^{6}}{\tau _{3}^{2}}\frac{\partial }{%
\partial x^{6}}\right) ,\quad {\cal T}_{V_{4}}^{{\cal H}}V_{4}=-\left( \frac{%
x^{7}}{\tau _{4}^{2}}\frac{\partial }{\partial x^{7}}+\frac{x^{8}}{\tau
_{4}^{2}}\frac{\partial }{\partial x^{8}}\right) , 
\]%
\[
{\cal T}_{V_{i}}^{{\cal H}}V_{j}=0~\text{ {\rm for all} $i\neq j$}. 
\]%
The only non-zero components of ${\cal T}^{\cal H}$ are
\[
\left( {\cal T}^{%
{\cal H}}\right) _{ii}^{i}=-\frac{1}{\tau _{i}}, \quad 1 \leq i \leq 4.
\]
Similarly to the previous example, we obtain ${\cal A}_{U_{i}}^{{\cal V}%
}U_{j}=0$ for all $U_{i},U_{j}\in {\cal H}$. Hence, we observe that the
inequality obtained in Theorem \ref{Theorem CSF} does not attain equality.
\end{example}
\begin{theorem}
\label{Theorem GSSF} Let $\pi :(M_{1},g_{1},J)\rightarrow \left(
M_{2},g_{2}\right) $ be a Riemannian submersion from a generalized Sasakian
space form onto a Riemannian manifold, where $\dim M_{1}=n_{1}=2m+1$ and $%
\dim M_{2}=n_{2}$ with $\dim {\cal H}_{p}=s\geq 3$, and $\dim {\cal V}%
_{p}=\ell \geq 3$. If $\xi \in {\cal V}_{p}$ or $\xi \in {\cal H}_{p}$,%
\begin{eqnarray}
\frac{\rho _{{\cal H}}^{{\cal H}}}{\ell (\ell -1)}+\frac{\rho _{{\cal V}}^{%
{\cal V}}}{s(s-1)} &\leq &\frac{1}{s(s-1)}\delta _{C}^{{\cal V}}\left( \ell
-1\right) +\frac{1}{\ell (\ell -1)}\delta _{C}^{{\cal H}}\left( s-1\right) 
\nonumber \\
&&+\frac{c_{1}}{s(s-1)\ell (\ell -1)}\left( \ell ^{2}+s^{2}+2sl-\ell
-s\right)  \nonumber \\
&&+\frac{3c_{2}}{\ell \left( \ell -1\right) s(s-1)}\left( \left\Vert
Q\right\Vert ^{2}+\left\Vert P\right\Vert ^{2}+2\left\Vert P^{{\cal V}%
}\right\Vert ^{2}\right)  \nonumber \\
&&-\frac{2c_{3}}{\ell s\left( s-1\right) (\ell -1)}\left( \ell +s-1\right) 
\nonumber \\
&&+\frac{1}{s(s-1)\ell (\ell -1)}\left( 2\breve{\delta}\left( N\right)
-\left\Vert {\cal T}^{{\cal V}}\right\Vert ^{2}+\left\Vert {\cal A}^{{\cal H}%
}\right\Vert ^{2}\right) ,  \label{eq-New GSSF 1}
\end{eqnarray}%
and%
\begin{eqnarray}
\frac{\rho _{{\cal H}}^{{\cal H}}}{\ell (\ell -1)}+\frac{\rho _{{\cal V}}^{%
{\cal V}}}{s(s-1)} &\leq &\frac{1}{s(s-1)}\hat{\delta}_{C}^{{\cal V}}\left(
\ell -1\right) +\frac{1}{\ell (\ell -1)}\hat{\delta}_{C}^{{\cal H}}\left(
s-1\right)  \nonumber \\
&&+\frac{c_{1}}{s(s-1)\ell (\ell -1)}\left( \ell ^{2}+s^{2}+2sl-\ell
-s\right)  \nonumber \\
&&+\frac{3c_{2}}{\ell \left( \ell -1\right) s(s-1)}\left( \left\Vert
Q\right\Vert ^{2}+\left\Vert P\right\Vert ^{2}+2\left\Vert P^{{\cal V}%
}\right\Vert ^{2}\right)  \nonumber \\
&&-\frac{2c_{3}}{\ell s\left( s-1\right) (\ell -1)}\left( \ell +s-1\right) 
\nonumber \\
&&+\frac{1}{s(s-1)\ell (\ell -1)}\left( 2\breve{\delta}\left( N\right)
-\left\Vert {\cal T}^{{\cal V}}\right\Vert ^{2}+\left\Vert {\cal A}^{{\cal H}%
}\right\Vert ^{2}\right) .  \label{eq-New GSSF 2}
\end{eqnarray}%
The equality holds in any of {\rm (\ref{eq-New GSSF 1})} and {\rm (\ref%
{eq-New GSSF 2})} at $p\in M_{1}$ if and only if the tensors ${\cal T}$ and $%
{\cal A}$ satisfy {\rm (\ref{eq-Cas-equality-1})}, {\rm (\ref%
{eq-Cas-equality-2})} and {\rm (\ref{eq-Cas-equality-3})}.
\end{theorem}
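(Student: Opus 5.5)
The plan follows the pattern of Theorems~\ref{Theorem RSF} and \ref{Theorem CSF}. Theorem~\ref{Th GEN CAS} holds for an arbitrary Riemannian submersion, so it suffices to compute $2\tau^{M_{1}}(p)$ from the curvature expression (\ref{eq-GSSF}). We then substitute the result into (\ref{Gen Ine Cas 1}) and (\ref{Gen Ine Cas 2}). Since this substitution leaves the quadratic part in ${\cal T}$ and ${\cal A}$ untouched, the equality characterization (\ref{eq-Cas-equality-1})--(\ref{eq-Cas-equality-3}) carries over verbatim.

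The computation splits (\ref{eq-GSSF}) into its three pieces. These are evaluated on the pairs $(V_i,V_j)$, $(U_i,U_j)$ and $(U_i,V_j)$ that make up $\tau^{M_{1}}$ in (\ref{eq-scalN1}).
\begin{itemize}
\item[(i)] The term $\frac{1}{2}c_{1}(g\circledast g)$ behaves exactly as in the real space form case with $c$ replaced by $c_1$. It contributes $c_{1}(\ell^{2}+s^{2}+2s\ell-\ell-s)$ to $2\tau^{M_1}$.
\item[(ii)] The $\varphi^{\flat}$ term is handled exactly as the $J^{\flat}$ term of (\ref{eq-GCSF}), using the decomposition (\ref{decompose_GSSF_RM}). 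On a pair $(X,Y)$ it evaluates to $3c_{2}\,g(X,\varphi Y)^{2}$. Summing over the vertical pairs, the horizontal pairs and the mixed pairs (counted twice in $2\tau^{M_1}$) gives $3c_{2}(\|Q\|^{2}+\|P\|^{2}+2\|P^{\cal V}\|^{2})$, in the notation fixed after (\ref{eq-GSSF}).
\item[(iii)] The $\eta$ term requires unfolding the Kulkarni--Nomizu product (\ref{eq-KN-product}). One gets
\[
-c_{3}\big(g\circledast(\eta\otimes\eta)\big)(X,Y,Y,X)=-c_{3}\big(\eta(X)^{2}+\eta(Y)^{2}\big)
\]
for orthonormal $X,Y$.
\end{itemize}

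Step (iii) is the only genuinely new computation, and it is where the hypothesis $\xi\in{\cal V}_p$ or $\xi\in{\cal H}_p$ enters. In the case $\xi\in{\cal V}_p$, choose the orthonormal basis with $V_{\ell}=\xi$. Then $\eta(V_j)=\delta_{j\ell}$ and $\eta(U_i)=0$. The vertical sum $\sum_{i\ne j}(\eta(V_i)^2+\eta(V_j)^2)$ gives $2(\ell-1)$. The mixed terms give $s$ for each of the two orderings, hence $2s$ in total. The horizontal terms give $0$. Altogether the contribution is $-2c_{3}(\ell+s-1)$. The case $\xi\in{\cal H}_p$ is symmetric and gives the same value. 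This symmetry is exactly why the bound has the $\xi$-independent form $-2c_3(\ell+s-1)$.

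Summing (i)--(iii) yields $2\tau^{M_1}(p)$. Dividing by $s(s-1)\ell(\ell-1)$ and inserting the result into Theorem~\ref{Th GEN CAS} gives (\ref{eq-New GSSF 1}) and (\ref{eq-New GSSF 2}).

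The main obstacle is bookkeeping of factors. The conventions $\tau$ versus $2\tau$ and ordered versus unordered pairs must be tracked consistently, since the mixed sum in (\ref{eq-scalN1}) is over ordered $(i,j)$ only once. The $c_2$ coefficient should also be checked against the complex case, where $c/4$ produced $3c/4$.
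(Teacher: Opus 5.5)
Your proposal is correct and is essentially the paper's proof. You compute $2\tau^{M_1}(p)$ from (\ref{eq-GSSF}), obtain $c_1(\ell^2+s^2+2s\ell-\ell-s)+3c_2(\|Q\|^2+\|P\|^2+2\|P^{\mathcal V}\|^2)-2c_3(\ell+s-1)$ as in (\ref{eq-scal-N-GSSF}), and substitute into Theorem~\ref{Th GEN CAS}; the only difference is that you organize the bookkeeping by curvature piece rather than by the vertical, horizontal and mixed sums (\ref{eq-Scal GSSF 1})--(\ref{eq-Scal GSSF 3}).

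As a side remark, $\tau^{M_1}(p)$ is just the full scalar curvature of $M_1$ and $\sum_a\eta(e_a)^2=1$, so your count in (iii) gives $-2c_3(\ell+s-1)$ wherever $\xi$ lies; the hypothesis $\xi\in\mathcal V_p$ or $\xi\in\mathcal H_p$ is therefore not actually needed.
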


\begin{proof} Using (\ref{eq-scalV}), (\ref{eq-Hscal}) and (\ref%
{eq-GSSF}), we obtain 
\begin{equation}
2\tau _{{\cal V}}^{M_{1}}(p)=\left\{ 
\begin{array}{ll}
\ell \left( \ell -1\right) c_{1}+3c_{2}\left\Vert Q\right\Vert ^{2}-2\left(
\ell -1\right) c_{3} & {\rm if}~\xi \in {\cal V}_{p}; \\ 
\ell \left( \ell -1\right) c_{1}+3c_{2}\left\Vert Q\right\Vert ^{2} & {\rm if%
}~\xi \in {\rm {\cal H}}_{p},%
\end{array}%
\right.  \label{eq-Scal GSSF 1}
\end{equation}%
\begin{equation}
2\tau _{{\cal H}}^{M_{1}}(p)=\left\{ 
\begin{array}{ll}
s\left( s-1\right) c_{1}+3c_{2}\left\Vert P\right\Vert ^{2} & {\rm if}~\xi
\in {\cal V}_{p}; \\ 
s\left( s-1\right) c_{1}+3c_{2}\left\Vert P\right\Vert ^{2}-2\left(
s-1\right) c_{3} & {\rm if}~\xi \in {\rm {\cal H}}_{p},%
\end{array}%
\right.  \label{eq-Scal GSSF 2}
\end{equation}%
\begin{equation}
\sum_{i=1}^{s}\sum_{j=1}^{\ell }R^{M_{1}}\left(
U_{i},V_{j},V_{j},U_{i}\right) =\left\{ 
\begin{array}{ll}
c_{1}s\ell +3c_{2}\left\Vert P^{{\cal V}}\right\Vert ^{2}-c_{3}s & {\rm if}%
~\xi \in {\cal V}_{p}; \\ 
c_{1}s\ell +3c_{2}\left\Vert P^{{\cal V}}\right\Vert ^{2}-c_{3}\ell & {\rm if%
}~\xi \in {\rm {\cal H}}_{p}.%
\end{array}%
\right.  \label{eq-Scal GSSF 3}
\end{equation}%
From (\ref{eq-Scal GSSF 1}), (\ref{eq-Scal GSSF 2}) and (\ref{eq-Scal GSSF 3}%
), we get $2\tau ^{M_{1}}\left( p\right) $ when $\xi \in {\cal V}_{p}$ or $%
\xi \in {\cal H}_{p}$, 
\begin{eqnarray}
2\tau ^{M_{1}}\left( p\right) &=&c_{1}\left( \ell ^{2}+s^{2}+2\ell s-\ell
-s\right) +3c_{2}\left( \left\Vert Q\right\Vert ^{2}+\left\Vert P\right\Vert
^{2}+2\left\Vert P^{{\cal V}}\right\Vert ^{2}\right)  \label{eq-scal-N-GSSF}
\\
&&-2c_{3}\left( \ell +s-1\right) .  \nonumber
\end{eqnarray}%
Substituting (\ref{eq-scal-N-GSSF}) into (\ref{Gen Ine Cas 1}) and (\ref{Gen Ine
Cas 2}), we get (\ref{eq-New GSSF 1}) and (\ref{eq-New GSSF 2}), respectively.
\end{proof}

We construct the following example to characterize equality cases of above
theorem.

\begin{example}
Let $\left( M_{1}={\Bbb R\times }_{f}{\Bbb C}^{n},\
g_{1}=dt^{2}+f^{2}\sum_{i=1}^{n}\left( \left( dx^{i}\right) ^{2}+\left(
dy^{i}\right) ^{2}\right) \right) $ be a generalised Sasakian space form
(see \cite{Alegre_Blair_Carriazo_04_IJM}) with $c_{1}=\frac{f^{\prime 2}}{%
f^{2}},c_{2}=0,c_{3}=-\frac{f^{\prime 2}}{f^{2}}+\frac{f^{\prime \prime }}{f}
$ and \newline
$\left( M_{2}={\Bbb R}^{2k},\ g_{2}=f^{2}\sum_{j=1}^{2k}\left( dZ^{j}\right)
^{2}\right) $ be a Riemannian manifold, where $f$ is a positive smooth
function on ${\Bbb R}$ and ${\Bbb C}^{n}$ is a complex space form of real
dimension $2n$. Define a map $\pi :\left( M_{1},g_{1}\right) \rightarrow
\left( M_{2},g_{2}\right) $ such that%
\[
\pi \left( t,x^{1},y^{1},\ldots ,x^{n},y^{n}\right) =\left(
x^{1},y^{1},\ldots ,x^{k},y^{k}\right) . 
\]%
Then, we obtain%
\begin{eqnarray*}
{\cal V} &=&{\rm span}\left\{ V_{1}=\frac{\partial }{\partial t},V_{2}=\frac{%
1}{f}\frac{\partial }{\partial x^{k+1}},V_{3}=\frac{1}{f}\frac{\partial }{%
\partial y^{k+1}}\right. \\
&&\left. ,\ldots ,V_{2\left( n-k\right) }=\frac{1}{f}\frac{\partial }{%
\partial x^{n}},V_{2\left( n-k\right) +1}=\frac{1}{f}\frac{\partial }{%
\partial y^{n}}\right\} ,
\end{eqnarray*}%
\[
{\cal H}={\rm span}\left\{ U_{1}=\frac{1}{f}\frac{\partial }{\partial x^{1}}%
,U_{2}=\frac{1}{f}\frac{\partial }{\partial y^{1}},\ldots ,U_{2k-1}=\frac{1}{%
f}\frac{\partial }{\partial x^{k}},U_{2k}=\frac{1}{f}\frac{\partial }{%
\partial y^{k}}\right\} , 
\]%
\[
{\rm range}~\pi _{\ast }={\rm span}\left\{ \pi _{\ast }\left( U_{i}\right) =%
\frac{1}{f}\frac{\partial }{\partial Z^{i}}\right\} _{i=1}^{2k}, 
\]%
where $\left\{ \frac{\partial }{\partial t},\frac{1}{f}\frac{\partial }{%
\partial x^{1}},\frac{1}{f}\frac{\partial }{\partial y^{1}},\ldots ,\frac{1}{%
f}\frac{\partial }{\partial x^{k}},\frac{1}{f}\frac{\partial }{\partial y^{k}%
},\frac{1}{f}\frac{\partial }{\partial x^{k+1}},\frac{1}{f}\frac{\partial }{%
\partial y^{k+1}},\ldots ,\frac{1}{f}\frac{\partial }{\partial x^{n}},\frac{1%
}{f}\frac{\partial }{\partial y^{n}}\right\} $ and $\left\{ \frac{1}{f}\frac{%
\partial }{\partial Z^{i}}\right\} _{i=1}^{2k}$ are bases of $T_{p}M_{1}$
and $T_{\pi \left( p\right) }M_{2}$, respectively. We observe that $%
g_{1}\left( U_{i},U_{j}\right) =g_{2}\left( \pi _{\ast }U_{i},\pi _{\ast
}U_{j}\right) $ for all $U_{i},U_{j}\in {\cal H}$ and ${\rm rank}\pi _{\ast
}=2k$ and dimension of vertical space is $2\left( n-k\right) +1$. Hence, $%
\pi $ is a Riemannian submersion. We have all non zero christoffel symbols
for $g_{1}$ are%
\[
\Gamma _{ii}^{t}=-ff^{\prime }\ {\rm and}\ \Gamma _{ti}^{i}=\frac{f^{\prime }%
}{f}\ {\rm for\ all}\ i\in \left\{ x_{j},y_{j}\right\} _{j=1}^{n}. 
\]%
Now, we obtain%
\[
\nabla _{V_{1}}V_{i}=0,\quad 1\leq i\leq 2\left( n-k\right) +1,\quad \nabla
_{V_{i}}V_{j}=0,\quad 2\leq i\neq j\leq 2\left( n-k\right) +1, 
\]%
\begin{eqnarray*}
\nabla _{V_{i}}V_{i} &=&-\frac{f^{\prime }}{f}\frac{\partial }{\partial t}%
,\quad 2\leq i\leq 2\left( n-k\right) +1, \\
\nabla _{V_{2i}}V_{1} &=&\frac{f^{\prime }}{f^{2}}\frac{\partial }{\partial
x^{i}},\ \nabla _{V_{2i+1}}V_{1}=\frac{f^{\prime }}{f^{2}}\frac{\partial }{%
\partial y_{i}},\quad 1\leq i\leq \left( n-k\right) ,
\end{eqnarray*}%
\[
\nabla _{U_{i}}U_{j}=0,\quad 1\leq i\neq j\leq 2k\quad {\rm and}\quad \nabla
_{U_{i}}U_{i}=0,\quad 1\leq i\leq 2k, 
\]%
This implies that $h\nabla _{V_{i}}V_{j}={\cal T}_{V_{i}}^{{\cal H}}V_{j}=0$
for all $1\leq i,j\leq 2\left( n-k\right) +1$. Hence, we have%
\begin{eqnarray*}
\left( {\cal T}^{{\cal H}}\right) _{ij}^{\alpha } &=&0,\quad 1\leq i,j\leq
2\left( n-k\right) +1,\quad \alpha \in \left\{ 1,\ldots ,2k\right\} , \\
\left( {\cal A}^{{\cal V}}\right) _{ij}^{\alpha } &=&0,\quad i,j\in \left\{
1,\ldots ,2k\right\} ,\quad 1\leq \alpha \leq 2\left( n-k\right) +1.
\end{eqnarray*}%
We observe that the inequality obtained in Theorem \ref{Theorem GSSF}
attains equality when $2\left( n-k\right) +1\geq 3$ and $2k\geq 3$.
\end{example}

By assuming the suitable values of $c_{1}$, $c_{2}$ and $c_{3}$ from table %
\ref{Table 1}, We have the following corollary:

\begin{corollary}
Let $\pi :(M_{1},g_{1})\rightarrow \left( M_{2},g_{2}\right) $ be a
Riemannian submersion, where $\dim M_{1}=n_{1}=2m+1$ and $\dim M_{2}=n_{2}$
with $\dim {\cal H}_{p}=s\geq 3$ and $\dim {\cal V}_{p}=\ell \geq 3$,

\begin{itemize}
\item[{\bf (1)}] If $M_{1}$ is sasakian space form. If $\xi \in {\cal V}_{p}$
or $\xi \in {\cal H}_{p}$,%
\begin{eqnarray}
\frac{\rho _{{\cal H}}^{{\cal H}}}{\ell (\ell -1)}+\frac{\rho _{{\cal V}}^{%
{\cal V}}}{s(s-1)} &\leq &\frac{1}{s(s-1)}\delta _{C}^{{\cal V}}\left( \ell
-1\right) +\frac{1}{\ell (\ell -1)}\delta _{C}^{{\cal H}}\left( s-1\right) 
\nonumber \\
&&+\frac{c+3}{4s(s-1)\ell (\ell -1)}\left( \ell ^{2}+s^{2}+2sl-\ell -s\right)
\nonumber \\
&&+\frac{3\left( c-1\right) }{4\ell \left( \ell -1\right) s(s-1)}\left(
\left\Vert Q\right\Vert ^{2}+\left\Vert P\right\Vert ^{2}+2\left\Vert P^{%
{\cal V}}\right\Vert ^{2}\right)  \nonumber \\
&&-\frac{2\left( c-1\right) }{4\ell s\left( s-1\right) (\ell -1)}\left( \ell
+s-1\right)  \nonumber \\
&&+\frac{1}{s(s-1)\ell (\ell -1)}\left( 2\breve{\delta}\left( N\right)
-\left\Vert {\cal T}^{{\cal V}}\right\Vert ^{2}+\left\Vert {\cal A}^{{\cal H}%
}\right\Vert ^{2}\right) ,  \label{eq-SAS-(1)}
\end{eqnarray}%
and%
\begin{eqnarray}
\frac{\rho _{{\cal H}}^{{\cal H}}}{\ell (\ell -1)}+\frac{\rho _{{\cal V}}^{%
{\cal V}}}{s(s-1)} &\leq &\frac{1}{s(s-1)}\hat{\delta}_{C}^{{\cal V}}\left(
\ell -1\right) +\frac{1}{\ell (\ell -1)}\hat{\delta}_{C}^{{\cal H}}\left(
s-1\right)  \nonumber \\
&&+\frac{c+3}{4s(s-1)\ell (\ell -1)}\left( \ell ^{2}+s^{2}+2sl-\ell -s\right)
\nonumber \\
&&+\frac{3\left( c-1\right) }{4\ell \left( \ell -1\right) s(s-1)}\left(
\left\Vert Q\right\Vert ^{2}+\left\Vert P\right\Vert ^{2}+2\left\Vert P^{%
{\cal V}}\right\Vert ^{2}\right)  \nonumber \\
&&-\frac{2\left( c-1\right) }{4\ell s\left( s-1\right) (\ell -1)}\left( \ell
+s-1\right)  \nonumber \\
&&+\frac{1}{s(s-1)\ell (\ell -1)}\left( 2\breve{\delta}\left( N\right)
-\left\Vert {\cal T}^{{\cal V}}\right\Vert ^{2}+\left\Vert {\cal A}^{{\cal H}%
}\right\Vert ^{2}\right) .  \label{eq-SAS-(2)}
\end{eqnarray}

\item[{\bf (2)}] If $M_{1}$ is Kenmotsu space form. If $\xi \in {\cal V}_{p}$
or $\xi \in {\cal H}_{p}$,%
\begin{eqnarray}
\frac{\rho _{{\cal H}}^{{\cal H}}}{\ell (\ell -1)}+\frac{\rho _{{\cal V}}^{%
{\cal V}}}{s(s-1)} &\leq &\frac{1}{s(s-1)}\delta _{C}^{{\cal V}}\left( \ell
-1\right) +\frac{1}{\ell (\ell -1)}\delta _{C}^{{\cal H}}\left( s-1\right) 
\nonumber \\
&&+\frac{c-3}{4s(s-1)\ell (\ell -1)}\left( \ell ^{2}+s^{2}+2sl-\ell -s\right)
\nonumber \\
&&+\frac{3\left( c+1\right) }{4\ell \left( \ell -1\right) s(s-1)}\left(
\left\Vert Q\right\Vert ^{2}+\left\Vert P\right\Vert ^{2}+2\left\Vert P^{%
{\cal V}}\right\Vert ^{2}\right)  \nonumber \\
&&-\frac{2\left( c+1\right) }{4\ell s\left( s-1\right) (\ell -1)}\left( \ell
+s-1\right)  \nonumber \\
&&+\frac{1}{s(s-1)\ell (\ell -1)}\left( 2\breve{\delta}\left( N\right)
-\left\Vert {\cal T}^{{\cal V}}\right\Vert ^{2}+\left\Vert {\cal A}^{{\cal H}%
}\right\Vert ^{2}\right) ,  \label{eq-Ken-(1)}
\end{eqnarray}%
and%
\begin{eqnarray}
\frac{\rho _{{\cal H}}^{{\cal H}}}{\ell (\ell -1)}+\frac{\rho _{{\cal V}}^{%
{\cal V}}}{s(s-1)} &\leq &\frac{1}{s(s-1)}\hat{\delta}_{C}^{{\cal V}}\left(
\ell -1\right) +\frac{1}{\ell (\ell -1)}\hat{\delta}_{C}^{{\cal H}}\left(
s-1\right)  \nonumber \\
&&+\frac{c-3}{4s(s-1)\ell (\ell -1)}\left( \ell ^{2}+s^{2}+2sl-\ell -s\right)
\nonumber \\
&&+\frac{3\left( c+1\right) }{4\ell \left( \ell -1\right) s(s-1)}\left(
\left\Vert Q\right\Vert ^{2}+\left\Vert P\right\Vert ^{2}+2\left\Vert P^{%
{\cal V}}\right\Vert ^{2}\right)  \nonumber \\
&&-\frac{2\left( c+1\right) }{4\ell s\left( s-1\right) (\ell -1)}\left( \ell
+s-1\right)  \nonumber \\
&&+\frac{1}{s(s-1)\ell (\ell -1)}\left( 2\breve{\delta}\left( N\right)
-\left\Vert {\cal T}^{{\cal V}}\right\Vert ^{2}+\left\Vert {\cal A}^{{\cal H}%
}\right\Vert ^{2}\right) .  \label{eq-Ken-(2)}
\end{eqnarray}

\item[{\bf (3)}] If $M_{1}$ is cosymplectic space form. If $\xi \in {\cal V}%
_{p}$ or $\xi \in {\cal H}_{p}$,%
\begin{eqnarray}
\frac{\rho _{{\cal H}}^{{\cal H}}}{\ell (\ell -1)}+\frac{\rho _{{\cal V}}^{%
{\cal V}}}{s(s-1)} &\leq &\frac{1}{s(s-1)}\delta _{C}^{{\cal V}}\left( \ell
-1\right) +\frac{1}{\ell (\ell -1)}\delta _{C}^{{\cal H}}\left( s-1\right) 
\nonumber \\
&&+\frac{c}{4s(s-1)\ell (\ell -1)}\left( \ell ^{2}+s^{2}+2sl-\ell -s\right) 
\nonumber \\
&&+\frac{3c}{4\ell \left( \ell -1\right) s(s-1)}\left( \left\Vert
Q\right\Vert ^{2}+\left\Vert P\right\Vert ^{2}+2\left\Vert P^{{\cal V}%
}\right\Vert ^{2}\right)  \nonumber \\
&&-\frac{2c}{4\ell s\left( s-1\right) (\ell -1)}\left( \ell +s-1\right) 
\nonumber \\
&&+\frac{1}{s(s-1)\ell (\ell -1)}\left( 2\breve{\delta}\left( N\right)
-\left\Vert {\cal T}^{{\cal V}}\right\Vert ^{2}+\left\Vert {\cal A}^{{\cal H}%
}\right\Vert ^{2}\right) ,  \label{eq-cos-(1)}
\end{eqnarray}%
and%
\begin{eqnarray}
\frac{\rho _{{\cal H}}^{{\cal H}}}{\ell (\ell -1)}+\frac{\rho _{{\cal V}}^{%
{\cal V}}}{s(s-1)} &\leq &\frac{1}{s(s-1)}\hat{\delta}_{C}^{{\cal V}}\left(
\ell -1\right) +\frac{1}{\ell (\ell -1)}\hat{\delta}_{C}^{{\cal H}}\left(
s-1\right)  \nonumber \\
&&+\frac{c}{4s(s-1)\ell (\ell -1)}\left( \ell ^{2}+s^{2}+2sl-\ell -s\right) 
\nonumber \\
&&+\frac{3c}{4\ell \left( \ell -1\right) s(s-1)}\left( \left\Vert
Q\right\Vert ^{2}+\left\Vert P\right\Vert ^{2}+2\left\Vert P^{{\cal V}%
}\right\Vert ^{2}\right)  \nonumber \\
&&-\frac{2c}{4\ell s\left( s-1\right) (\ell -1)}\left( \ell +s-1\right) 
\nonumber \\
&&+\frac{1}{s(s-1)\ell (\ell -1)}\left( 2\breve{\delta}\left( N\right)
-\left\Vert {\cal T}^{{\cal V}}\right\Vert ^{2}+\left\Vert {\cal A}^{{\cal H}%
}\right\Vert ^{2}\right) .  \label{eq-cos-(2)}
\end{eqnarray}

\item[{\bf (4)}] If $M_{1}$ is almost $C\left( \alpha \right) $ space form.
If $\xi \in {\cal V}_{p}$ or $\xi \in {\cal H}_{p}$,%
\begin{eqnarray}
\frac{\rho _{{\cal H}}^{{\cal H}}}{\ell (\ell -1)}+\frac{\rho _{{\cal V}}^{%
{\cal V}}}{s(s-1)} &\leq &\frac{1}{s(s-1)}\delta _{C}^{{\cal V}}\left( \ell
-1\right) +\frac{1}{\ell (\ell -1)}\delta _{C}^{{\cal H}}\left( s-1\right) 
\nonumber \\
&&+\frac{c+3\alpha }{4s(s-1)\ell (\ell -1)}\left( \ell
^{2}+s^{2}+2sl-\ell -s\right)  \nonumber \\
&&+\frac{3\left( c-\alpha \right) }{4\ell \left( \ell -1\right) s(s-1)}%
\left( \left\Vert Q\right\Vert ^{2}+\left\Vert P\right\Vert ^{2}+2\left\Vert
P^{{\cal V}}\right\Vert ^{2}\right)  \nonumber \\
&&-\frac{2\left( c-\alpha \right) }{4\ell s\left( s-1\right) (\ell -1)}%
\left( \ell +s-1\right)  \nonumber \\
&&+\frac{1}{s(s-1)\ell (\ell -1)}\left( 2\breve{\delta}\left( N\right)
-\left\Vert {\cal T}^{{\cal V}}\right\Vert ^{2}+\left\Vert {\cal A}^{{\cal H}%
}\right\Vert ^{2}\right) ,  \label{eq-calpha-(1)}
\end{eqnarray}%
and%
\begin{eqnarray}
\frac{\rho _{{\cal H}}^{{\cal H}}}{\ell (\ell -1)}+\frac{\rho _{{\cal V}}^{%
{\cal V}}}{s(s-1)} &\leq &\frac{1}{s(s-1)}\hat{\delta}_{C}^{{\cal V}}\left(
\ell -1\right) +\frac{1}{\ell (\ell -1)}\hat{\delta}_{C}^{{\cal H}}\left(
s-1\right)  \nonumber \\
&&+\frac{c+3\alpha }{4s(s-1)\ell (\ell -1)}\left( \ell
^{2}+s^{2}+2sl-\ell -s\right)  \nonumber \\
&&+\frac{3\left( c-\alpha \right) }{4\ell \left( \ell -1\right) s(s-1)}%
\left( \left\Vert Q\right\Vert ^{2}+\left\Vert P\right\Vert ^{2}+2\left\Vert
P^{{\cal V}}\right\Vert ^{2}\right)  \nonumber \\
&&-\frac{2\left( c-\alpha \right) }{4\ell s\left( s-1\right) (\ell -1)}%
\left( \ell +s-1\right)  \nonumber \\
&&+\frac{1}{s(s-1)\ell (\ell -1)}\left( 2\breve{\delta}\left( N\right)
-\left\Vert {\cal T}^{{\cal V}}\right\Vert ^{2}+\left\Vert {\cal A}^{{\cal H}%
}\right\Vert ^{2}\right) .  \label{eq-calpha-(2)}
\end{eqnarray}
\end{itemize}
\end{corollary}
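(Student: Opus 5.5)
The corollary is a direct specialisation of Theorem~\ref{Theorem GSSF}, so the plan is simply to substitute the values of $c_{1}$, $c_{2}$, $c_{3}$ from Table~\ref{Table 1} into (\ref{eq-New GSSF 1}) and (\ref{eq-New GSSF 2}). First I will check that the hypotheses match. Each of the four structures (Sasakian, Kenmotsu, cosymplectic, almost $C(\alpha)$ space form) is a generalized Sasakian space form whose curvature tensor has the form (\ref{eq-GSSF}) with the tabulated constants. The dimension hypotheses $s\geq 3$, $\ell\geq 3$ and the assumption $\xi\in{\cal V}_{p}$ or $\xi\in{\cal H}_{p}$ are the same as in Theorem~\ref{Theorem GSSF}. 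Hence that theorem applies verbatim, and in particular the identity (\ref{eq-scal-N-GSSF}) for $2\tau^{M_{1}}(p)$ holds in both cases for $\xi$.

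Next I will carry out the substitutions term by term. The $c_{1}$-term becomes $\frac{c+3}{4}$, $\frac{c-3}{4}$, $\frac{c}{4}$, $\frac{c+3\alpha}{4}$ times $(\ell^{2}+s^{2}+2s\ell-\ell-s)/(s(s-1)\ell(\ell-1))$. The $3c_{2}$-term becomes $\frac{3(c-1)}{4}$, $\frac{3(c+1)}{4}$, $\frac{3c}{4}$, $\frac{3(c-\alpha)}{4}$ times $(\|Q\|^{2}+\|P\|^{2}+2\|P^{\cal V}\|^{2})/(\ell(\ell-1)s(s-1))$. Since $c_{3}=c_{2}$ in every row of the table, the $-2c_{3}(\ell+s-1)$ term becomes $-\frac{2(c-1)}{4}(\ell+s-1)$, and similarly for the other rows. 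The terms involving the normalized $\delta$-Casorati curvatures, $\breve{\delta}(N)$, $\|{\cal T}^{\cal V}\|^{2}$ and $\|{\cal A}^{\cal H}\|^{2}$ do not depend on the ambient constants and are carried over unchanged. This reproduces (\ref{eq-SAS-(1)})--(\ref{eq-calpha-(2)}) exactly.

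Finally, the equality characterisation (\ref{eq-Cas-equality-1})--(\ref{eq-Cas-equality-3}) is inherited from Theorem~\ref{Theorem GSSF}, since the substitution changes only constant terms. There is no real obstacle. The only point needing care is bookkeeping: the $c_{3}$ entry must be read from the same table column as $c_{2}$ in each case, and one should observe that the structural vector field $\xi$ is assumed to lie entirely in ${\cal V}_{p}$ or in ${\cal H}_{p}$, so that the case split in (\ref{eq-Scal GSSF 1})--(\ref{eq-Scal GSSF 3}) yields the same total $-2c_{3}(\ell+s-1)$.
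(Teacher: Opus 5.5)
Your proposal is the paper's own argument: the paper obtains the corollary by substituting the Table~\ref{Table 1} values of $c_{1}$, $c_{2}$, $c_{3}$ (with $c_{2}=c_{3}$ in every row) into Theorem~\ref{Theorem GSSF}, and your term-by-term bookkeeping is correct, including the point that the $c_{3}$-contribution $-2c_{3}(\ell+s-1)$ is the same for $\xi\in{\cal V}_{p}$ and $\xi\in{\cal H}_{p}$. Be aware, however, that the substitution only carries over whatever validity Theorem~\ref{Theorem GSSF} has, and that theorem rests on identity (\ref{eq-scalN1-3}), which appears to be wrong: by (\ref{Gauss_Codazz_RS_A}) and O'Neill's mixed-curvature formula, the horizontal block should contribute $-3sC^{\cal H}$ and the mixed block $2\breve{\delta}(N)+2\Vert{\cal A}^{\cal H}\Vert^{2}-2\Vert{\cal T}^{\cal V}\Vert^{2}$ to $2\tau^{M_{1}}(p)$; a concrete check is the Kenmotsu space form $H^{7}(-1)=\mathbb{R}\times_{e^{t}}\mathbb{R}^{6}$ (so $c=-1$, $\xi=\partial_{t}$) with $\pi(t,x^{1},\dots,x^{6})=(t,x^{1},x^{2},x^{3})$ onto $H^{4}(-1)$, where I get $\ell=3$, $s=4$, $\xi\in{\cal H}$, ${\cal A}=0$, ${\cal T}_{V}W=-g_{1}(V,W)\partial_{t}$ for vertical $V,W$, $\rho_{\cal H}^{\cal H}=-1$, $\rho_{\cal V}^{\cal V}=0$, $\delta_{C}^{\cal V}(2)=7/6$, $\breve{\delta}(N)=-9$ and $\Vert{\cal T}^{\cal V}\Vert^{2}=3$, so the left-hand side of (\ref{eq-Ken-(1)}) is $-1/6$ while its right-hand side is $(7-42-21)/72=-7/9$, and the Kenmotsu case of the corollary as stated would fail there.
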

\noindent{\bf Acknowledgement.} The first author gratefully acknowledges the Human Resource Development Group (HRDG), Council of Scientific and Industrial Research (CSIR), New Delhi, India, for awarding the CSIR Junior Research Fellowship (File No. 09/0013(16054)/2022-EMR-I). The second author gratefully acknowledges Banaras Hindu University for providing the ``Incentive Grant'' under the IoE Scheme (BHU), Ref. No. R/Dev/D/IoE/Incentive (Phase-IV)/2024-25/82489.

\end{document}